\date{} \setlength{\textwidth}{15truecm}
\newtheorem{theorem}{Theorem}[section]
\newtheorem{corollary}[theorem]{Corollary}
\numberwithin{equation}{section}
\DeclareMathOperator{\supp}{supp}
\begin{document}

\title[]{Approximate Monge solutions continuously depending on the parameter}
\maketitle

\begin{center}
S.N. Popova~\footnote{Moscow Institute of Physics and Technology; National Research University Higher School of Economics.}
\end{center}

\vskip .2in

{\bf Abstract.}
We consider Kantorovich optimal transportation problem in the case where the cost function and marginal distributions continuously depend on a parameter with
values in a metric space.
We prove the existence of approximate optimal Monge mappings continuous with respect to the parameter. 

Keywords: optimal transportation problem, Kantorovich problem, Monge problem, continuity with respect to a parameter.  

\vskip .2in

\section{Introduction}

We recall that, given two Borel probability measures $\mu$ and $\nu$ on topological spaces $X$ and $Y$ respectively 
and a nonnegative Borel function $h$ on~$X\times Y$, the Kantorovich optimal transportation problem concerns minimization of  the integral
$$
K_h(\mu, \nu) = \inf \Bigl\{\int h\, d\sigma: \sigma \in \Pi(\mu, \nu) \Bigr\}
$$
over all measures $\sigma$ in the set $\Pi(\mu,\nu)$ consisting of Borel probability measures on $X\times Y$
with projections $\mu$ and $\nu$ on the factors, that is, $\sigma (A\times Y)=\mu(A)$ and $\sigma (X\times B)=\nu(B)$
for all Borel sets $A\subset X$ and $B\subset Y$. The measures $\mu$ and $\nu$ are called marginal distributions
or marginals, and $h$ is called a cost function.
In general, there is only infimum $K_h(\mu,\nu)$, which may be infinite. If the cost function $h$ is continuous (or at least lower semicontinuous)  
and bounded and the measures $\mu$ and $\nu$ are Radon, then the minimum is attained and measures on which it is attained are called
optimal measures or optimal Kantorovich plans. The boundedness of $h$ can be replaced by the assumption that
there is a measure in $\Pi(\mu,\nu)$ with respect to which $h$ is integrable. 
The Monge problem for the same triple $(\mu, \nu, h)$ consists in finding a Borel mapping $T\colon X\to Y$ taking $\mu$ into $\nu$,
that is $\nu=\mu\circ T^{-1}$, $(\mu\circ T^{-1})(B)=\mu(T^{-1}(B))$ for all Borel sets $B \subset Y$, for which the integral
$$
M_h(\mu, \nu) = \inf \Bigl\{ \int h(x, T(x))\, \mu(dx): \mu\circ T^{-1} = \nu \Bigr\}
$$
is minimal. In general, there is only infimum $M_h(\mu, \nu)$ (possibly, infinite),
but in many interesting cases there exist optimal Monge mappings. In any case,
$K_h(\mu,\nu)\le M_h(\mu, \nu)$, but if both measures are Radon, $\mu$ has no atoms and is separable, and the cost function $h$ is continuous, 
then $K_h(\mu,\nu)= M_h(\mu, \nu)$ (see \cite{BKP}, \cite{P}). This equality implies that if there is a unique
solution $T$ to the Monge problem, then the image of $\mu$ under the mapping $x\mapsto (x,T(x))$ is an optimal
Kantorovich plan. 
General information about Monge and Kantorovich problems can be found
in \cite{AG}, \cite{BK}, \cite{RR}, \cite{Sant}, and~\cite{V2}.

We consider optimal transportation of measures on metric and topological spaces
in the case where the cost function $h_t$ and marginal distributions $\mu_t$ and $\nu_t$  depend on a parameter $t$ with
values in a metric space. Kantorovich problems depending on a parameter were investigated in \cite{V2}, \cite{Z}, \cite{Mal}, \cite{BM}, where the questions of measurability were studied. 
We address the problem of continuity with respect to the parameter. Here the questions naturally arise about the continuity with respect to $t$ of
the optimal cost $K_{h_t}(\mu_t,\nu_t)$ and also about the possibility to select an  optimal plan in $\Pi(\mu_t,\nu_t)$ continuous with respect to
the parameter.  
In \cite{BP22_arxiv}, \cite{BP22_dan} it was proved that the cost of optimal transportation is continuous  with respect to the parameter in the case of  continuous
dependence of the cost function and marginal distributions on this parameter. Furthermore, it was shown that it is not always possible to select an optimal plan continuously depending on the parameter $t$.
However, it is possible to select approximate optimal plans continuous with respect to the parameter. 
Continuous dependence on marginals was considered in \cite{Berg99}, \cite{SavZar14}, and~\cite{GS21}. 
Similar problems may be studied for nonlinear cost functionals (see \cite{GRST2}, \cite{B-VBP}, \cite{Backhoff}, \cite{BPR}, \cite{NKP}), see also the recent survey \cite{B-umn22}.

Introduce the notation and terminology that will be used in this paper. 
A nonnegative Radon measure on a topological space $X$ is a bounded Borel measure $\mu\ge 0$
such that for every Borel set $B$ and every $\varepsilon>0$ there is a compact set $K\subset B$ such that
$\mu(B\backslash K)<\varepsilon$ (see \cite{B07}). If $X$ is a complete separable metric space,
then all Borel measures are Radon.

The space $\mathcal{M}_r(X)$ of signed bounded Radon measures on $X$ can be equipped with
the weak topology generated by the seminorms
$$
\mu\mapsto \biggl|\int f\, d\mu\biggr|,
$$
where $f$ is a bounded continuous function.

A set $\mathcal{M}$ of nonnegative Radon measures on a  space $X$ is called uniformly tight,
if for every $\varepsilon>0$ there exists a  compact set $K\subset X$ such that
$\mu(X\backslash K)<\varepsilon$ for all $\mu\in\mathcal{M}$.

Let $(X, d_X)$ and $(Y, d_Y)$ be metric spaces. The space $X\times Y$ is equipped with the metric
$$
d((x_1,y_1),(x_2,y_2))= d_X(x_1,x_2)+d_Y(y_1,y_2).
$$
The weak topology on the spaces of Radon probability measures $\mathcal{P}_r(X)$, $\mathcal{P}_r(Y)$,  $\mathcal{P}_r(X\times Y)$
is metrizable by  the  corresponding Kantorovich--Rubinshtein metrics  $d_{KR}$ (also called the
Fortet--Mourier  metrics, see \cite{B18}) defined by
$$
d_{KR}(\mu,\nu)=\sup \biggl\{\int f\, d(\mu - \nu)\colon f\in {\rm Lip}_1, \ |f|\le 1\biggr\},
$$
where ${\rm Lip}_1$ is the space of $1$-Lipschitz functions. If $X$ is complete, then
$(\mathcal{P}_r(X), d_{KR})$ is also complete and if $X$ is Polish, then $\mathcal{P}_r(X)$ is also Polish.

In this paper we study the existence of approximate optimal Monge mappings continuous with respect to the parameter. 
Section 2 addresses the case where the measures $\mu \in \mathcal P_r(X)$ and $\nu \in \mathcal P_r(Y)$ are fixed and
$h \colon X \times Y \times T \to [0, \infty)$ is a continuous cost function. In Section 3 we assume that the measure $\mu \in \mathcal P_r(X)$ is fixed and the measures $\nu_t \in \mathcal P_r(Y)$ continuously depend on $t$ in the weak topology.  
We prove that there exist approximate Monge solutions $T_t^{\varepsilon}$ such that $T_t^{\varepsilon}$ is continuous in $t$
in the sense of convergence $\mu$-a.e.: if $t_n \to t$ as $n \to \infty$, 
then $T_{t_n}^{\varepsilon} \to T_t^{\varepsilon}$ $\mu$-a.e. We also generalize this result to the case where the measures $\mu_t$ are continuous in $t$ in the total variation norm and the measures $\nu_t$ are continuous in $t$ in the weak topology. 

\section{The Monge problem with fixed marginals}

In \cite{BP22_arxiv} the question was addressed whether it is possible to select an optimal plan continuously depending on the parameter $t$.
The examples were constructed which show that such a choice is not always possible.
However, the situation improves for approximate optimal plans. 
Given $\varepsilon>0$, a measure $\sigma\in \Pi(\mu,\nu)$ will be called
$\varepsilon$-optimal for the cost function $h$ if
$$
\int h\, d\sigma \le K_h(\mu,\nu)+\varepsilon.
$$

\begin{theorem}[\cite{BP22_arxiv}] \label{t-appr} 
Let $X$, $Y$ be complete metric spaces. Let $T$ be a metric space, and  for every $t\in T$
we are given measures $\mu_t\in \mathcal{P}_r(X)$ and $\nu_t\in \mathcal{P}_r(Y)$ such that
the mappings $t\mapsto \mu_t$ and $t\mapsto \nu_t$ are continuous in the weak topology
(which is equivalent to the continuity in the Kantorovich--Rubinshtein metric). Suppose also
that there is a continuous nonnegative function $(t,x,y)\mapsto h_t(x,y)$. 
Suppose that for every $t$ there exist nonnegative Borel functions \mbox{$a_t\in L^1(\mu_t)$} and $b_t\in L^1(\nu_t)$
such that 
\begin{equation}\label{mainboundt}
h_t(x,y)\le a_t(x)+b_t(y),
\quad
\lim\limits_{R\to+\infty} \sup_t \biggl(\int_{\{a_t\ge R\}} a_t\, d\mu_t+ \int_{\{b_t\ge R\}} b_t\, d\nu_t\biggr)=0.
\end{equation}
Then one can select $\varepsilon$-optimal  measures $\sigma_t^\varepsilon\in \Pi(\mu_t,\nu_t)$ for the cost functions $h_t$
such that they will be continuous in $t$ in the weak topology for every fixed $\varepsilon>0$.

If for every $t$ there is a unique optimal plan~$\sigma_t$, then it is continuous
in~$t$.
\end{theorem}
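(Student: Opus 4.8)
The plan is to obtain the family $\sigma_t^\varepsilon$ as a continuous selection of a lower semicontinuous convex-valued map, after reducing the problem to the continuity of the optimal cost. Write $V(t)=K_{h_t}(\mu_t,\nu_t)$ and, for a fixed $\varepsilon>0$, consider the multivalued map
$$
\Phi_\varepsilon(t)=\Bigl\{\sigma\in\Pi(\mu_t,\nu_t)\colon \int h_t\,d\sigma\le V(t)+\varepsilon\Bigr\}.
$$
A weakly continuous family of $\varepsilon$-optimal plans is exactly a continuous selection of $\Phi_\varepsilon$, so the whole first assertion reduces to producing such a selection.

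First I would assemble the structural facts that make $\Phi_\varepsilon$ tractable. Since $t\mapsto\mu_t$ and $t\mapsto\nu_t$ are weakly continuous on the metric space $T$, on each compact neighborhood these families are uniformly tight by Prokhorov's theorem, hence so is the family $\bigcup_t\Pi(\mu_t,\nu_t)$ locally; together with the uniform integrability encoded in \eqref{mainboundt} this makes $V$ continuous in $t$ (the cost-continuity proved in \cite{BP22_arxiv}, which I invoke) and makes $(t,\sigma)\mapsto\int h_t\,d\sigma$ continuous along weakly convergent sequences on such neighborhoods. It follows that each value $\Phi_\varepsilon(t)$ is nonempty (an optimal or $\varepsilon$-optimal plan exists), convex (both $\Pi(\mu_t,\nu_t)$ and the sublevel constraint are convex), and weakly closed.

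The crux is lower semicontinuity of $\Phi_\varepsilon$: given $t_n\to t$ and $\sigma\in\Phi_\varepsilon(t)$, I must exhibit $\sigma_n\in\Phi_\varepsilon(t_n)$ with $\sigma_n\to\sigma$ weakly. The engine is a gluing/stability step. Because $d_{KR}(\mu_t,\mu_{t_n})\to0$ and $d_{KR}(\nu_t,\nu_{t_n})\to0$, one may choose couplings $\alpha_n\in\Pi(\mu_t,\mu_{t_n})$ and $\beta_n\in\Pi(\nu_t,\nu_{t_n})$ whose displacement is small in the bounded-Lipschitz sense; gluing them to $\sigma$ over the shared $X$- and $Y$-variables and projecting onto the new variables yields $\widetilde\sigma_n\in\Pi(\mu_{t_n},\nu_{t_n})$ with $\widetilde\sigma_n\to\sigma$ weakly, and then \eqref{mainboundt} upgrades this to $\int h_{t_n}\,d\widetilde\sigma_n\to\int h_t\,d\sigma$. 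The delicate point is the boundary case $\int h_t\,d\sigma=V(t)+\varepsilon$: here I would first replace $\sigma$ by the strictly-better plans $\sigma^{(k)}=(1-1/k)\sigma+(1/k)\sigma_t^{\mathrm{opt}}$, which satisfy $\int h_t\,d\sigma^{(k)}<V(t)+\varepsilon$ and converge weakly to $\sigma$, glue each of them, and use $V(t_n)\to V(t)$ to guarantee $\int h_{t_n}\,d\widetilde\sigma_n^{(k)}\le V(t_n)+\varepsilon$ for $n$ large; a diagonal choice $k=k(n)\to\infty$ then delivers the required $\sigma_n\in\Phi_\varepsilon(t_n)$ converging weakly to $\sigma$. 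This establishes that $\Phi_\varepsilon$ is lower semicontinuous with nonempty closed convex values.

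Since $T$ is metric, hence paracompact, and $\mathcal P_r(X\times Y)$ is a convex subset of the completely metrizable locally convex space of signed Radon measures (its weak topology being metrized by $d_{KR}$), Michael's continuous selection theorem now yields a weakly continuous selection $t\mapsto\sigma_t^\varepsilon\in\Phi_\varepsilon(t)$, which is the asserted family; alternatively one globalizes by hand, pasting the continuous local families produced by the gluing above with a locally finite partition of unity, the convex combination remaining in $\Pi(\mu_t,\nu_t)$ and respecting the $\varepsilon$-bound by linearity. For the final assertion, suppose the optimal plan $\sigma_t$ is unique for every $t$ and let $t_n\to t$; local uniform tightness makes $\{\sigma_{t_n}\}$ relatively weakly compact, any weak limit $\sigma^\ast$ of a subsequence lies in $\Pi(\mu_t,\nu_t)$ and satisfies $\int h_t\,d\sigma^\ast\le\liminf\int h_{t_n}\,d\sigma_{t_n}=\liminf V(t_n)=V(t)$ by lower semicontinuity of the integral and continuity of $V$, so $\sigma^\ast$ is optimal and equals $\sigma_t$ by uniqueness; as every subsequence has a further subsequence with the same limit, the whole sequence converges, giving continuity. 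The main obstacle throughout is the lower semicontinuity step — transferring a plan across nearby marginal pairs while controlling the change of $\int h\,d\sigma$ and, especially, staying strictly within the $\varepsilon$-budget at the boundary — and it rests essentially on the cited continuity of $V$ together with the uniform integrability \eqref{mainboundt}.
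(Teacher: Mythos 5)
This theorem is quoted in the paper from \cite{BP22_arxiv} without an internal proof, and your proposal reconstructs the cited argument in its essentials: continuity of the optimal cost $t\mapsto K_{h_t}(\mu_t,\nu_t)$, a gluing/stability step that transfers a plan across nearby marginal pairs using couplings small in the Kantorovich--Rubinshtein sense with the uniform integrability \eqref{mainboundt} controlling the tails, and a selection obtained from convexity of $\Pi(\mu_t,\nu_t)$ and the sublevel constraint via a locally finite partition of unity on the paracompact metric space $T$ --- your appeal to Michael's theorem is just a packaged form of that same convex pasting, which you also sketch by hand. Your boundary-case interpolation with strictly better plans and the tightness/lower-semicontinuity argument for the uniqueness assertion likewise match the cited proof, so the proposal is correct and takes essentially the same route.
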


In this paper we strengthen the result from \cite{BP22_arxiv} looking at approximate optimal Monge mappings continuously depending on the parameter. 

First, we consider the particular case where the marginals $\mu \in \mathcal P_r(X)$, $\nu \in \mathcal P_r(Y)$ are fixed and cost functions $h_t$ depend on the parameter $t$. 
We prove the following result on the existence of approximate optimal Monge mappings continuously depending on the parameter $t$.   

\begin{theorem}\label{th1} 
Let $X, Y$ be completely regular topological spaces. Let $\mu$ be a non-atomic Radon probability measure on $X$, let $\nu$ be a Radon probability measure on $Y$, and the measures $\mu$ and $\nu$ are concentrated on countable unions of metrizable compact sets (i.e. we may assume that $X$ and $Y$ are Souslin spaces). Let $T$ be a metric space, $h \colon X \times Y \times T \to [0, \infty)$ be a continuous function such that $h(x, y, t) \le a_t(x) + b_t(y)$, where $a_t \in L^1(\mu)$, $b_t \in L^1(\nu)$ and
\begin{equation}\label{bound1}
\lim_{R \to +\infty} \sup_{t \in T} \Bigl(\int_{a_t \ge R} a_t d\mu + \int_{b_t \ge R} b_t d\nu \Bigr) = 0.
\end{equation}
Then for any $\varepsilon > 0$ one can select $\varepsilon$-optimal Monge mappings $T_t^{\varepsilon}$ for the cost functions $h_t$
such that $T_t^{\varepsilon}$ is continuous in $t$ in the sense of convergence $\mu$-a.e.: if $t_n \to t$ as $n \to \infty$, then $T_{t_n}^{\varepsilon} \to T_t^{\varepsilon}$ $\mu$-a.e.
\end{theorem}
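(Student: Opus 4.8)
The plan is to first reduce to a continuous family of near-optimal Kantorovich plans and then to ``Monge-ify'' them continuously. Since the marginals are fixed, Theorem~\ref{t-appr} applied with $\mu_t\equiv\mu$, $\nu_t\equiv\nu$ already furnishes $\varepsilon/2$-optimal plans $\sigma_t\in\Pi(\mu,\nu)$ depending continuously on $t$ in the weak topology. Using \eqref{bound1} and the domination $h(x,y,t)\le a_t(x)+b_t(y)$, I would first fix compact sets $K_X\subset X$ and $K_Y\subset Y$ carrying all but an arbitrarily small amount of $\mu$ and $\nu$, uniformly in $t$; because every plan in $\Pi(\mu,\nu)$ has these same marginals, the cost contribution outside $K_X\times K_Y$ is $<\varepsilon$ uniformly in $t$ for \emph{all} such plans, so the whole construction may be run on $K_X\times K_Y$, where $h$ is bounded, with the tails absorbed into the error. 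I would also identify $(X,\mu)$ with $([0,1],\lambda)$ through a measure isomorphism, which is legitimate since $\mu$ is an atomless Radon measure on a Souslin space.

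The heart of the argument is a procedure converting a plan into a map that preserves the \emph{exact} marginal $\nu$. Fix finite Borel partitions $X=\bigsqcup_j I_j$ (intervals of $[0,1]$) and $Y=\bigsqcup_i Y_i$ into sets of small diameter whose boundaries are $\mu$-, resp.\ $\nu$-null. Because the marginals are fixed, each product cell $I_j\times Y_i$ is a $\sigma_t$-continuity set \emph{for every} $t$: its boundary lies in $(\partial I_j\times Y)\cup(X\times\partial Y_i)$, of $\sigma_t$-measure $\mu(\partial I_j)+\nu(\partial Y_i)=0$. Hence the masses $m_{ij}^t:=\sigma_t(I_j\times Y_i)$ depend continuously on $t$. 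Inside each $I_j$ I cut out consecutive subintervals of lengths $m_{ij}^t$ in a fixed order of the index $i$ (which is legitimate since $\sum_i m_{ij}^t=\mu(I_j)$); their union over $j$ is a set $X_i^t$ with $\mu(X_i^t)=\nu(Y_i)$ and $\mu(X_i^t\cap I_j)=m_{ij}^t$. Finally I fix, once and for all, Borel maps $\phi_i\colon[0,\nu(Y_i)]\to Y_i$ pushing Lebesgue measure onto $\nu|_{Y_i}$ (these exist because the source is atomless) and set $T_t^\varepsilon:=\phi_i\circ g_i^t$ on $X_i^t$, where $g_i^t\colon X_i^t\to[0,\nu(Y_i)]$, $g_i^t(x)=\mu(X_i^t\cap[0,x])$, is the measure-preserving order isomorphism. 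By construction $T_t^\varepsilon$ pushes $\mu$ onto $\nu$ exactly, and since $\{T_t^\varepsilon\in Y_i\}=X_i^t$, the induced plan agrees with $\sigma_t$ on every cell $I_j\times Y_i$; with the mesh fine enough the two costs differ by less than $\varepsilon/2$, so $T_t^\varepsilon$ is $\varepsilon$-optimal.

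To verify the $\mu$-a.e.\ continuity, take $t_n\to t$. Then $m_{ij}^{t_n}\to m_{ij}^t$, so all the cut points defining the subintervals inside each $I_j$ converge. Consequently, for $\lambda$-a.e.\ $x$ (those $x$ that are not endpoints of any limiting subinterval and for which $g_i^t(x)$ avoids the Lebesgue-null discontinuity set of $\phi_i$) the point $x$ lies, for all large $n$, in a cell $X_i^{t_n}$ with the \emph{same} index $i$ as for the limit, and its position satisfies $g_i^{t_n}(x)\to g_i^t(x)$. Since each $\phi_i$ is continuous off a null set, $T_{t_n}^\varepsilon(x)=\phi_i(g_i^{t_n}(x))\to\phi_i(g_i^t(x))=T_t^\varepsilon(x)$, which is precisely the required convergence $\mu$-a.e.

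The main difficulty I anticipate is reconciling two competing demands: $\varepsilon$-optimality forces the partition to be fine enough to control the oscillation of $h(\cdot,\cdot,t)$ over $K_X\times K_Y$, whereas continuity of the selection wants the partition held fixed as $t$ varies. I expect to resolve this using the equicontinuity of $h$ on the compact set $K_X\times K_Y$ together with the fact that $\mu$-a.e.\ continuity is a sequential, hence local, property: along any convergent sequence the set $\{t_n\}\cup\{t\}$ is compact, so $h$ is uniformly continuous there and a single sufficiently fine partition governs the whole sequence, rendering the fixed-partition construction above applicable. Upgrading this local uniformity to one globally defined family $t\mapsto T_t^\varepsilon$, for instance through a locally constant choice of partition fineness subordinate to an open cover of $T$, is the step that will demand the most care.
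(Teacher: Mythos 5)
Your overall strategy --- take the continuous family of near-optimal plans from Theorem \ref{t-appr}, discretize, and convert each plan into a map by monotone rearrangement --- is the same as the paper's, and several individual steps (continuity of the cell masses $m_{ij}^t$ via null boundaries, the cut-point construction, the a.e.\ convergence argument) are sound. But the step you yourself flag as ``demanding the most care'' is a genuine gap, and the fix you sketch does not work. A single fixed finite partition of $K_X\times K_Y$ yields $\varepsilon$-optimality for every $t$ only if the oscillation of $h(\cdot,\cdot,t)$ over each cell is small \emph{uniformly in} $t$; since $T$ is an arbitrary metric space and $h$ is merely jointly continuous, no such uniform modulus exists (take $T=(0,1]$ and $h$ oscillating faster in $x$ as $t\to 0$). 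Your proposed repair --- a ``locally constant choice of partition fineness subordinate to an open cover of $T$'' --- is impossible on a connected $T$ (a locally constant function there is constant), and gluing maps built from different partitions on overlapping pieces of a cover would destroy both the well-definedness and the $\mu$-a.e.\ continuity of the global selection $t\mapsto T_t^{\varepsilon}$. The compactness of $\{t_n\}\cup\{t\}$ does not help either: the family $T_t^{\varepsilon}$ must be defined once, globally, before any sequence is tested, so a partition chosen per sequence is not admissible.

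The paper resolves exactly this point by making the mesh a \emph{continuous} function $\delta(t)=\sum_{\alpha}\kappa_{\tau(\alpha)}\psi_{\alpha}(t)$ built from a partition of unity subordinate to the cover $\{W_{t_0}\}$, and then letting the cells themselves vary with $t$, namely $S_j(t)=S\cap[(j-1)\tilde\delta(t),\,j\tilde\delta(t))$. This forces two further changes to your scheme that you would have to supply: (i) continuity of the masses of the now $t$-dependent cells must be deduced from $\mu(X_j(t_n)\triangle X_j(t))\to 0$ combined with weak convergence of the plans, not from the fixed-boundary argument; and (ii) the $Y$-side can no longer be handled by fixed maps $\phi_i$ on fixed cells $Y_i$ --- the paper instead takes the projections $\nu^j_t$ of $I_{X_j(t)}\pi_t$ onto $Y$ and parametrizes them via the strong Skorohod property, which supplies maps $\xi_{t,j}$ converging $\lambda$-a.e.\ as $t_n\to t$ and at the same time removes any need to control the oscillation of $h$ in the $y$-variable (your fixed $\phi_i$ would also need to be chosen $\lambda$-a.e.\ continuous, which an arbitrary Borel pushforward map is not). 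A smaller but real imprecision: identifying $(X,\mu)$ with $([0,1],\lambda)$ by a mere measure isomorphism is not enough, because ``intervals of small length'' must correspond to sets on which $h(\cdot,y,t)$ oscillates little; you need the almost homeomorphism of Theorem~9.6.3 in \cite{B07}, restricted to a compact set on which it is a genuine homeomorphism, as the paper does.
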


\begin{proof}
We first consider the case where the function $h$ is bounded. We may assume that $h \le 1$.  
Let $\varepsilon > 0$. Set $\varepsilon_1 = \varepsilon/5$. Let us take a metrizable compact set $\tilde K_1 \subset X$ such that $\mu(X \setminus \tilde K_1) < \varepsilon_1/2$. 
Since the measure $\mu$ is non-atomic and the compact set $\tilde K_1$ is metrizable, 
the measure space $(\tilde K_1, \mu|_{\tilde K_1})$ is almost homeomorphic to 
$([0, \mu(\tilde K_1)], \lambda)$, where 
$\lambda$ is Lebesgue measure (see \cite[Theorem~9.6.3]{B07}). Let \linebreak $\varphi \colon [0, \mu(\tilde K_1)] \to \tilde K_1$ be an almost homeomorphism. Then there exists a compact set  
$S \subset [0, \mu(\tilde K_1)]$ such that
$0 < \lambda([0, \mu(\tilde K_1)] \setminus S) < \varepsilon_1/2$ and $\varphi|_{S}$ is a homeomorphism. 
Denote $K_1 = \varphi(S)$. Then $K_1$ is a metrizable compact set and the measure space $(K_1, \mu|_{K_1})$ is homeomorphic to $(S, \lambda)$. Moreover, we have 
$$
0 < \mu(X \setminus K_1) = \mu(X \setminus \tilde K_1) + \lambda([0, \mu(\tilde K_1)] \setminus S) < \varepsilon_1.
$$
Let us take a metrizable compact set $K_2 \subset Y$ such that $\nu(Y \setminus K_2) \le \mu(X \setminus K_1)$. 
Let $d_{K_1}$ be the metric generating the topology on $K_1$. 

Let us prove that there exists a continuous (strictly positive) function $\delta \colon T \to (0, +\infty)$ such that for any 
$x_1, x_2 \in K_1$, $y \in K_2$, $t \in T$ we have $|h(x_1, y, t) - h(x_2, y, t)| < \varepsilon_1$ if $d_{K_1}(x_1, x_2) < \delta(t)$. Since $h$ is continuous on $K_1 \times K_2 \times T$, it follows that for any $t_0 \in T$ there exists a real number
$\kappa_{t_0} > 0$ and an open neighbourhood $W_{t_0} \subset T$
($t_0 \in W_{t_0}$) such that $|h(x_1, y, t) - h(x_2, y, t)| < \varepsilon_1$ for any $x_1, x_2 \in K_1$ with $d_{K_1}(x_1, x_2) < \kappa_{t_0}$ and for any $y \in K_2$, $t \in W_{t_0}$. 
The metric space $T$ posseses a locally finite continuous partition of unity $\{\psi_{\alpha}, \alpha \in A\}$ subordinated to the open cover $\{W_t, t \in T\}$, i.e. a set of continuous functions $\psi_{\alpha}$, $\alpha \in A$, such that $0 \le \psi_{\alpha} \le 1$ for any $\alpha \in A$, $\supp \psi_{\alpha} \subset W_{\tau(\alpha)}$ for some $\tau(\alpha) \in T$, for every point $t \in T$ there exists a neighbourhood $W$ such that $W \cap \supp \psi_{\alpha} \neq \varnothing$ for at most finite number of indices $\alpha \in A$, and $\sum_{\alpha} \psi_{\alpha}(t) = 1$.

Set 
$$\delta(t) = \sum_{\alpha} \kappa_{\tau(\alpha)} \psi_{\alpha}(t).$$ 
Then the function $\delta(t)$ is continuous, since for any point $t \in T$ there exists a neighbourhood $W$ such that $\delta(t)$ is equal to the sum of a finite number of continuous functions on $W$. Let us show that the function $\delta(t)$ satisfies the required condition. Fix $t_0 \in T$. Let $\alpha_1, \dots, \alpha_N$ be all indices from the set $A$ such that $\psi_{\alpha_i}(t_0) \neq 0$. Then $t_0 \in W_{\tau(\alpha_i)}$ for all $i \in \{1, \dots, N\}$. 
The equality $\sum_{\alpha} \psi_{\alpha}(t_0) = 1$ implies that $0 < \delta(t_0) \le \max(\kappa_{\tau(\alpha_1)}, \dots, \kappa_{\tau(\alpha_N)})$. Therefore, by the definition of the numbers $\kappa_t$ we have  
$|h(x_1, y, t_0) - h(x_2, y, t_0)| < \varepsilon_1$ if  $x_1, x_2 \in K_1$, $d_{K_1}(x_1, x_2) < \delta(t_0)$, $y \in K_2$.

Let us build a partition $$S = \bigsqcup_{j = 1}^{\infty} S_j(t)$$ satisfying the following properties: 
\begin{itemize}
\item[1)] for any $j \in \mathbb N$ the mapping $t \mapsto I_{S_j(t)}$ (where $I_{B}$ denotes the indicator function of a set $B$) is continuous in the sense of convergence $\lambda$-a.e., that is, for any sequence $t_n \to t$, $n \to \infty$, we have 
$I_{S_j(t_n)} \to I_{S_j(t)}$ $\lambda$-a.e.,
\item[2)] for any $j \in \mathbb N$ and for any $t \in T$ we have
$|h(\varphi(s_1), y, t) - h(\varphi(s_2), y, t)| < \varepsilon_1$ for all $s_1, s_2 \in S_j(t)$, $y \in K_2$.
\end{itemize}

Since the mapping $\varphi$ is continuous, as proven above, there exists a continuous function $\tilde \delta \colon T \to (0, +\infty)$ such that for any $s_1, s_2 \in S$, $y \in K_2$, $t \in T$ we have $|h(\varphi(s_1), y, t) - h(\varphi(s_2), y, t)| < \varepsilon_1$ if $|s_1 - s_2| \le \tilde \delta(t)$. Set $$S_j(t) = S \cap [(j-1) \tilde \delta(t), j \tilde \delta(t)), \quad j \in \mathbb N. $$ 
Then $S = \bigsqcup_{j = 1}^{\infty} S_j(t)$. From the definition of the function $\tilde \delta(t)$ it follows that the property 2) is satisfied. Let us prove that the property 1) is fulfilled. Let $t_n \to t$ as $n \to \infty$. For any $j \in \mathbb N$ let us show that $I_{S_j(t_n)} \to 1$ for all $s \in S \cap ((j - 1) \tilde \delta(t), j \tilde \delta(t))$. 
Fix $s \in S$, $s \in ((j - 1) \tilde \delta(t), j \tilde \delta(t))$. 
Then for all sufficiently large numbers $n$ it holds that $s \in ((j - 1) \tilde \delta(t_n), j \tilde \delta(t_n))$, since  
$\tilde \delta(t_n) \to \tilde \delta(t)$. Therefore, $I_{S_j(t_n)}(s) = 1$ for all sufficiently large $n$. 
Thus for all $s \in S \cap ((j - 1) \tilde \delta(t), j \tilde \delta(t))$ and for all $i \in \mathbb N$ 
we have $I_{S_i(t_n)}(s) \to I_{S_i(t)}(s)$. Therefore, the property 1) is satisfied. 

Set $X_j(t) = \varphi(S_j(t))$. Then $K_1 = \bigsqcup_{j = 1}^{\infty} X_j(t)$. We have 
$I_{X_j(t_n)} \to I_{X_j(t)}$ $\mu$-a.e., if $t_n \to t$, $n \to \infty$ (this also implies that
$\mu(X_j(t_n) \triangle X_j(t)) \to 0$ as $n \to \infty$). Furthermore, for any $j \in \mathbb N$ and for any $t \in T$ we have $|h(x_1, y, t) - h(x_2, y, t)| < \varepsilon_1$ for all $x_1, x_2 \in X_j(t)$, $y \in K_2$.

Consider the Kantorovich problem with the cost function $h(x, y, t)$ and measures $\mu|_{K_1}$, $\alpha \nu|_{K_2}$, where $\alpha = \mu(K_1)/\nu(K_2) \le 1$.  
By Theorem \ref{t-appr} there exist $\varepsilon$-optimal measures $\pi_t \in \Pi(\mu|_{K_1}, \alpha \nu|_{K_2})$ for the cost function $h(x, y, t)$ such that $\pi_t$ is continuous in $t$ in the weak topology.
Let $\nu^j_t$ be the projection of the measure $I_{X_j(t)} \pi_t$ on $Y$, $j \in \mathbb N$. Let us show that $\nu^j_t$ is continuous in $t$ 
in the weak topology. Let $t_n \to t$ as $n \to \infty$, we show that the measures $\nu^j_{t_n}$ converge weakly to $\nu^j_t$. We have $\|I_{X_j(t_n)} \pi_{t_n} - I_{X_j(t)} \pi_{t_n}\| =    
\mu(X_j(t_n) \triangle X_j(t)) \to 0$, where $\|\cdot\|$ is the total variation norm. Therefore, it is sufficient to prove that the measures $I_{X_j(t)} \pi_{t_n}$ converge weakly to $I_{X_j(t)} \pi_t$. 
Let $g \in C_b(X \times Y)$, $|g| \le 1$, we show that
$$
\int_{X \times Y} g(x, y) I_{X_j(t)} \pi_{t_n}(dx dy) \to \int_{X \times Y} g(x, y) I_{X_j(t)} \pi_t(dx dy). 
$$
Fix $\delta > 0$. Take a compact set $F_j$ and an open set $U_j$ such that $F_j \subset X_j(t) \subset U_j$ and $\mu(U_j \setminus F_j) < \delta$. There exists a continuous function $f \colon X \to \mathbb R$ such that $f = 1$ on $F_j$, $f = 0$ outside $U_j$, $0 \le f \le 1$. Then 
$$\int_{X \times Y} f(x) g(x, y) \pi_{t_n}(dx dy) \to \int_{X \times Y} f(x) g(x, y) \pi_t(dx dy),$$ 
since $\pi_{t_n}$ converge weakly to $\pi_t$. 
Furthermore, we have $|I_{X_j(t)} - f(x)| \le I_{U_j \setminus F_j}$. Therefore, 
\begin{multline*}
\Bigl|\int_{X \times Y} (I_{X_j(t)} g(x, y) \pi_{t_n}(dx dy) - \int_{X \times Y}  f(x) g(x, y) \pi_{t_n}(dx dy) \Bigr| \le \\ \le 
\int_{X \times Y} |(I_{X_j(t)} - f(x)) g(x, y)| \pi_{t_n}(dx dy) \le 
\int_{X \times Y} I_{U_j \setminus F_j} \pi_{t_n}(dx dy) = \mu(U_j \setminus F_j) < \delta. 
\end{multline*}
From above we obtain  
\begin{multline*}
\Bigl|\int_{X \times Y} g(x, y) I_{X_j(t)} \pi_{t_n}(dx dy) - \int_{X \times Y} g(x, y) I_{X_j(t)} \pi_t(dx dy) \Bigr| \le  \\ \le
\Bigl|\int_{X \times Y} f(x) g(x, y) \pi_{t_n}(dx dy) - \int_{X \times Y} f(x) g(x, y) \pi_t(dx dy)\Bigr| + 2 \delta. 
\end{multline*}
Hence $\int g(x, y) I_{X_j(t)} \pi_{t_n}(dx dy) - \int g(x, y) I_{X_j(t)} \pi_t(dx dy) \to 0$. 
Therefore, the measures $\nu^j_{t_n}$ converge weakly to $\nu^j_t$, i.e. the mapping $t \mapsto \nu^j_t$ is continuous in the weak topology. 

Since the compact set $K_2$ is metrizable, it posseses the strong Skorohod property (see \cite{B18}), that is, for any probability measure 
$\eta$ on $K_2$ there exists a mapping $\xi_{\eta} \colon [0, 1] \to K_2$ such that 
$\lambda \circ \xi_{\eta}^{-1} = \eta$, where $\lambda$ is Lebesgue measure on $[0, 1]$, and if measures $\eta_n$ converge weakly to 
$\eta$, then $\xi_{\eta_n} \to \xi_{\eta}$ $\lambda$-a.e. 

Since the mapping $t \mapsto \nu^j_t$ is continuous in the weak topology for any $j \in \mathbb N$, by the strong Skorohod property 
for any $j \in \mathbb N$ there exists a mapping $\xi_{t, j} \colon [0, \lambda(S_j(t))] \to K_2$ such that
$$\lambda|_{[0, \lambda(S_j(t))]} \circ \xi_{t, j}^{-1} = \nu^j_t$$ 
and $\xi_{t, j}$ is continuous in $t$ in the sense of convergence $\lambda$-a.e. Set 
$$F^j_t(s) = \lambda([0, s] \cap S_j(t)). $$ 
Then the mapping $t \mapsto F^j_t$ is continuous in $t$ in the topology of pointwise convergence: if $t_n \to t$ as $n \to \infty$, then $F^j_{t_n}(s) \to F^j_t(s)$ for any $s \in S$. 
Indeed, $|F^j_{t_n}(s) - F^j_t(s)| \le \lambda(S_j(t_n) \triangle S_j(t)) \to 0$ as $n \to \infty$.  
Set 
$$T_t(x) = \xi_{t, j}(F^j_t(\varphi^{-1}(x))) \quad \mbox{if  } x \in X_j(t), \, j \in \mathbb N.
$$
Then $\mu|_{X_j(t)} \circ T_t^{-1} = \nu^j_t$, since $\varphi^{-1} \colon K_1 \to S$ is a homeomorpism which transfers  the measure $\mu|_{X_j(t)}$ to the measure $\lambda|_{S_j(t)}$ and the mapping $F^j_t$ transfers the measure 
$\lambda|_{S_j(t)}$ to the measure $\lambda|_{[0, \lambda(S_j(t))]}$. 
Therefore, $\mu|_{K_1} \circ T_t^{-1} = \alpha \nu|_{K_2}$. 
Since the measure $\mu$ is non-atomic, there exists a mapping $T \colon X \setminus K_1 \to Y$ such that 
$$\mu|_{X \setminus K_1} \circ T^{-1} = \nu - \alpha \nu|_{K_2}.$$
Set $T_t(x) = T(x)$ for any $x \in X \setminus K_1$. Then $\mu \circ T_t^{-1} = \nu$. 

Let us show that the mapping $T_t$ is continuous in $t$ in the sense of convergence $\mu$-a.e. Let $t_n \to t$, $n \to \infty$. Prove that for any $j \in \mathbb N$ 
$$\mu(\{x \in X_j(t): T_{t_n}(x) \not \to T_t(x)\}) = 0.$$ 
For $\mu$-a.e. $x \in X_j(t)$ it holds that $x \in X_j(t_n)$ for all sufficiently large $n$, 
since $I_{X_j(t_n)} \to I_{X_j(t)}$ $\mu$-a.e. Therefore, for $\mu$-a.e. $x \in X_j(t)$ we have for all sufficiently large $n$ 
$$T_{t_n}(x) = \xi_{t_n, j}(F^j_{t_n}(\varphi^{-1}(x))) \to \xi_{t, j}(F^j_t(\varphi^{-1}(x))) = T_t(x),$$  
since
$F^j_{t_n}(\varphi^{-1}(x)) \to F^j_t(\varphi^{-1}(x))$ due to continuity of $F^j_t$ in $t$ 
and $\xi_{t_n, j} \to \xi_{t, j}$ $\lambda$-a.e. 
Thus $\mu(\{x \in X: T_{t_n}(x) \not \to T_t(x)\}) = 0$ and the mapping $T_t$ is continuous in $t$ in the sense of convergence $\mu$-a.e.

Let us show that the mapping $T_t$ is $\varepsilon$-optimal for every $t \in T$. Fix $t \in T$. 
For any $j \in \mathbb N$ we have (fix some $x_0 \in X_j(t))$
\begin{multline*}
\Bigl|\int_{X_j(t)} h_t(x, T_t x) \mu(dx) - \int_{K_2} h_t(x_0, y) \nu^j_t(dy) \Bigr| =  \\  
= \Bigl|\int_{X_j(t)} (h_t(x, T_t x) - h_t(x_0, T_t x)) \mu(dx)\Bigr| < \varepsilon_1 \mu(X_j(t)), 
\end{multline*}
since $\mu|_{X_j(t)} \circ T_t^{-1} = \nu_t^j$ and 
$|h_t(x, y) - h_t(x_0, y)| < \varepsilon_1$ for any $x \in X_j(t)$, $y \in K_2$.
Similarly
\begin{multline*}
\Bigl|\int_{X_j(t) \times K_2} h_t(x, y) \pi_t(dx dy) - \int_{K_2} h_t(x_0, y) \nu^j_t(dy)\Bigr| = \\ 
= \Bigl|\int_{X_j(t) \times K_2} (h_t(x, y) - h_t(x_0, y)) \pi_t(dx dy)\Bigr| < \varepsilon_1 \mu(X_j(t)).
\end{multline*}
Therefore, 
$$\int_{X_j(t)} h_t(x, T_t x) \mu(dx) \le \int_{X_j(t) \times K_2} h_t(x, y) \pi_t(dx dy) + 2 \varepsilon_1 \mu(X_j(t)).$$ 
Summing over $j \in \mathbb N$, we obtain the inequality  
$$\int_{K_1} h_t(x, T_t x) \mu(dx) \le \int_{K_1 \times K_2} h_t(x, y) \pi_t(dx dy) + 2 \varepsilon_1. $$ 
Moreover, $\int_{X \setminus K_1} h_t(x, T_t x) \mu(dx) \le \mu(X \setminus K_1) < \varepsilon_1$. 
Hence 
$$\int_X h_t(x, T_t x) \mu(dx) \le \int_{K_1 \times K_2} h_t(x, y) \pi_t(dx dy) + 3 \varepsilon_1. $$

Let $\sigma \in \Pi(\mu, \nu)$ be an optimal measure in the Kantorovich problem with the cost function $h_t(x, y)$ and measures $\mu, \nu$. Let $\mu_1$ and $\nu_1$ be the projections of the measure $I_{K_1 \times K_2} \sigma$ on $X$ and $Y$ respectively. 
Set $\tilde \sigma = \alpha I_{K_1 \times K_2} \sigma + \zeta$, where 
$\zeta \in \Pi(\mu|_{K_1} - \alpha \mu_1, \alpha \nu|_{K_2} - \alpha \nu_1)$. 
Then $\tilde \sigma \in \Pi(\mu|_{K_1}, \alpha \nu|_{K_2})$ and hence 
\begin{multline*}
\int_{K_1 \times K_2} h_t(x, y) \pi_t(dx dy) \le \int_{K_1 \times K_2} h_t(x, y) \tilde \sigma(dx dy) + \varepsilon_1
\le \\ \le \int_{K_1 \times K_2} h_t(x, y) \sigma (dx dy) + (\nu(K_2) - \nu_1(K_2)) + \varepsilon_1.  
\end{multline*}
We have $\nu(K_2) - \nu_1(K_2) = \sigma((X \setminus K_1) \times K_2) \le \mu(X \setminus K_1) < \varepsilon_1$. 

Therefore,  
$$
\int_X h_t(x, T_t x) \mu(dx) \le \int_{K_1 \times K_2} h_t(x, y) \pi_t(dx dy) + 3 \varepsilon_1 \le  
\int_{X \times Y} h_t(x, y) \sigma(dx dy) + 5 \varepsilon_1. 
$$ 
So the mapping $T_t$ is $5\varepsilon_1$-optimal for any $t \in T$. 

Consider now the general case. Let $h(x, y, t) \le a_t(x) + b_t(y)$, where the functions $a_t \in L^1(\mu)$ and $b_t \in L^1(\nu)$ satisfy (\ref{bound1}). Let $N \in \mathbb N$. As proven above, for the bounded continuous function $\min(h, N)$ there exist 
$\varepsilon/2$-optimal Monge mappings $T_t$ which are continuous in $t$ in the sense of convergence $\mu$-a.e. 
For any measure $\sigma \in \Pi(\mu, \nu)$ we have
\begin{multline*}
\int h_t d \sigma - \int \min(h_t, N) d\sigma \le \int h_t I_{\{h_t \ge N\}} d\sigma \le \\ \le
\int (2 a_t I_{\{a_t \ge N/2\}} + 2 b_t I_{\{b_t \ge N/2\}}) d\sigma = 2 \int_{a_t \ge N/2} a_t d\mu + 2 \int_{b_t \ge N/2} b_t d\nu. 
\end{multline*}
Take $N \in \mathbb N$ such that $\int_{a_t \ge N/2} a_t d\mu + \int_{b_t \ge N/2} b_t d\nu < \varepsilon/4$. 
Then the mappings $T_t$ are $\varepsilon$-optimal for the cost function $h$.  
\end{proof}

\section{The Monge problem with marginals depending on the parameter}

Assume that the measure $\mu \in \mathcal P_r(X)$ is fixed and the measures $\nu_t \in \mathcal P_r(Y)$ continuously depend on $t$ in the weak topology. 
We show that one can select approximate optimal Monge mappings continuously depending on the parameter $t$ in the sense of convergence $\mu$-a.e.

\begin{theorem}\label{th2} 
Let $X, Y$ be complete metric spaces and let 
$\mu$ be a non-atomic Radon probability measure on $X$. 
Let $T$ be a metric space, the mapping $t \mapsto \nu_t$, $T \to \mathcal P_r(Y)$, is continuous in the weak topology, 
$h \colon X \times Y \times T \to [0, \infty)$ is a continuous function such that 
$h(x, y, t) \le a_t(x) + b_t(y)$, where $a_t \in L^1(\mu)$, $b_t \in L^1(\nu_t)$ and
$$
\lim_{R \to +\infty} \sup_{t \in T} \Bigl(\int_{a_t \ge R} a_t d\mu + \int_{b_t \ge R} b_t d\nu_t \Bigr) = 0. 
$$ 
Then for any $\varepsilon > 0$ one can select $\varepsilon$-optimal Monge mappings $T_t^{\varepsilon}$ for the cost functions $h_t$ and measures $\mu$, $\nu_t$ (i.e. $\mu \circ (T_t^{\varepsilon})^{-1} = \nu_t$ for every $t \in T$) such that
$T_t^{\varepsilon}$ is continuous in $t$ in the sense of convergence $\mu$-a.e.: if $t_n \to t$ as $n \to \infty$, then $T_{t_n}^{\varepsilon} \to T_t^{\varepsilon}$ $\mu$-a.e.
\end{theorem}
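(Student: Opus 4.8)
The plan is to follow the blueprint of the proof of Theorem~\ref{th1}, re-using verbatim everything that depends only on the fixed marginal $\mu$, and to replace the two places where the fixed target $\nu$ and the compact set $K_2\subset Y$ capturing its mass entered by constructions that tolerate a genuinely moving, possibly non-uniformly-tight, family $\nu_t$. As before I first reduce to a bounded cost: the truncation estimate $\int h_t\,d\sigma-\int\min(h_t,N)\,d\sigma\le 2\int_{\{a_t\ge N/2\}}a_t\,d\mu+2\int_{\{b_t\ge N/2\}}b_t\,d\nu_t$, valid for every $\sigma\in\Pi(\mu,\nu_t)$, together with the uniform integrability hypothesis, shows that $\varepsilon/2$-optimal continuous Monge mappings for the bounded continuous cost $\min(h,N)$ are $\varepsilon$-optimal for $h$ once $N$ is large; so I may assume $0\le h\le 1$. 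On the source side I keep the objects built from $\mu$ alone: a metrizable compact $K_1\subset X$ with $\mu(X\setminus K_1)<\varepsilon_1$, the almost homeomorphism $\varphi$ identifying $(K_1,\mu|_{K_1})$ with $(S,\lambda)$ for a compact $S\subset[0,\mu(\widetilde K_1)]$, and the partition $S=\bigsqcup_j S_j(t)$, $X_j(t)=\varphi(S_j(t))$, governed by a continuous cut-off $\widetilde\delta(t)$ still to be chosen.

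Next I produce the continuous family of Kantorovich plans. Applying Theorem~\ref{t-appr} to the marginals $\mu|_{K_1}$ (fixed) and $\mu(K_1)\,\nu_t$ (weakly continuous, since $t\mapsto\nu_t$ is) and the bounded cost $h$, I obtain weakly continuous $\varepsilon_1$-optimal plans $\pi_t\in\Pi(\mu|_{K_1},\mu(K_1)\nu_t)$. Exactly as in Theorem~\ref{th1} the projections $\nu_t^{\,j}$ of $I_{X_j(t)}\pi_t$ onto $Y$ depend weakly continuously on $t$ (this step uses only $\mu(X_j(t_n)\triangle X_j(t))\to0$ and $\pi_{t_n}\to\pi_t$). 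The crucial departure is that I now do \emph{not} confine the target to a compact set: since the measures $\nu_t^{\,j}$ live on all of $Y$ and the family $\{\nu_t\}$ need not be uniformly tight, I invoke the strong Skorohod property of $Y$ itself (valid since the relevant Radon measures are concentrated on a Polish subspace, see \cite{B18}) to get maps $\xi_{t,j}\colon[0,\lambda(S_j(t))]\to Y$ with $\lambda\circ\xi_{t,j}^{-1}=\nu_t^{\,j}$ that are continuous in $t$ in the sense of $\lambda$-a.e.\ convergence. Setting $T_t(x)=\xi_{t,j}(F_t^{\,j}(\varphi^{-1}(x)))$ on $X_j(t)$, with $F_t^{\,j}(s)=\lambda([0,s]\cap S_j(t))$, and defining $T_t$ on $X\setminus K_1$ by a further application of the strong Skorohod property (a continuous-in-$t$ map pushing the fixed $\mu|_{X\setminus K_1}$ onto the residual target $\mu(X\setminus K_1)\nu_t$), I obtain $\mu\circ T_t^{-1}=\nu_t$; the $\mu$-a.e.\ continuity of $t\mapsto T_t$ follows exactly as in Theorem~\ref{th1} from the continuity of $I_{X_j(t)}$, of $F_t^{\,j}$ and of $\xi_{t,j}$.

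The heart of the matter is choosing $\widetilde\delta$ so that $T_t$ is $\varepsilon$-optimal for every fixed $t$, and here the absence of a compact $K_2$ forces me to replace the pointwise oscillation control of Theorem~\ref{th1} by an averaged one. For $\eta>0$ put $G_\eta(y,t)=\max\{|h(\varphi(s_1),y,t)-h(\varphi(s_2),y,t)|:s_1,s_2\in S,\ |s_1-s_2|\le\eta\}$ and $I(\eta,t)=\mu(K_1)\int_Y G_\eta(y,t)\,d\nu_t(y)$. The function $G_\eta$ is jointly continuous and bounded by $1$ (a maximum of a continuous function over the fixed compact set of pairs $(s_1,s_2)$ with $|s_1-s_2|\le\eta$), so $I(\eta,\cdot)$ is continuous in $t$ (weak convergence of $\nu_{t_n}$ together with joint continuity of $G_\eta$ and tightness of convergent sequences), is nondecreasing in $\eta$, and $I(\eta,t)\to0$ as $\eta\to0$ for each fixed $t$ by dominated convergence. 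These are precisely the properties that let me repeat the partition-of-unity construction of Theorem~\ref{th1}: choosing for each $t_0$ a number $\kappa_{t_0}$ with $I(\kappa_{t_0},t_0)<\varepsilon_1/2$ and a neighbourhood $W_{t_0}$ on which $I(\kappa_{t_0},\cdot)<\varepsilon_1$, and setting $\widetilde\delta=\sum_\alpha\kappa_{\tau(\alpha)}\psi_\alpha$, I get a continuous $\widetilde\delta>0$ with $I(\widetilde\delta(t),t)<\varepsilon_1$ for every $t$, since $\widetilde\delta(t_0)\le\max_i\kappa_{\tau(\alpha_i)}$ over the finitely many active indices, $t_0\in W_{\tau(\alpha_{i^*})}$ for the maximizing index, and $I(\cdot,t_0)$ is monotone. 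With this $\widetilde\delta$ the oscillation of $h(\cdot,y,t)$ over each piece $X_j(t)$, integrated against $\nu_t^{\,j}$, sums to at most $I(\widetilde\delta(t),t)<\varepsilon_1$, so the comparison of $\int_{X_j(t)}h_t(x,T_tx)\,d\mu$ with $\int_{X_j(t)\times Y}h_t\,d\pi_t$ goes through with total error $O(\varepsilon_1)$; adding the $X\setminus K_1$ contribution (at most $\mu(X\setminus K_1)<\varepsilon_1$, as $h\le1$) and comparing $\pi_t$ with a full optimal plan $\sigma_t\in\Pi(\mu,\nu_t)$ exactly as in Theorem~\ref{th1} (the auxiliary coupling $\widetilde\sigma_t=\mu(K_1)\,I_{K_1}\sigma_t+\zeta_t$, with $\zeta_t\in\Pi((1-\mu(K_1))\mu|_{K_1},\,\mu(K_1)(\nu_t-\rho_t))$, costs at most $O(\varepsilon_1)$ more than $\sigma_t$) yields $\int_X h_t(x,T_tx)\,d\mu\le K_{h_t}(\mu,\nu_t)+O(\varepsilon_1)$. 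I expect the main obstacle to be exactly this decoupling: the family $\{\nu_t\}$ can fail to be uniformly tight on every neighbourhood (for instance when $T=\mathcal P_r(\mathbb R)$ and $\nu_t=t$), so no single compact target set is available, and the whole point is that the averaged modulus $I(\eta,t)$—continuous in $t$, monotone in $\eta$, vanishing as $\eta\to0$—together with the global strong Skorohod property of $Y$ supplies a substitute that still feeds into the partition-of-unity argument.
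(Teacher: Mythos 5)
Your proposal is correct, and it diverges from the paper's proof at exactly the point you identify as the heart of the matter. The paper handles the non-uniformly-tight family $\{\nu_t\}$ by building, alongside the continuous cut-off $\delta(t)$, a family of \emph{closed target sets} $Y(t)=K_2(\tau(\alpha(t)))^{r(\tau(\alpha(t)))}$ (closed $r$-neighbourhoods of local compacta) with $\nu_t(Y\setminus Y(t))<\varepsilon_1$, on which a pointwise oscillation bound $|h(x_1,y,t)-h(x_2,y,t)|<\varepsilon_1$ holds; the price is that the final cost comparison carries the extra exceptional-set terms $\mu(X_j(t)\setminus T_t^{-1}(Y(t)))$ and $\pi_t(X_j(t)\times(Y\setminus Y(t)))$, each summing to $\nu_t(Y\setminus Y(t))<\varepsilon_1$. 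You instead replace the pointwise modulus by the integrated one $I(\eta,t)=\mu(K_1)\int_Y G_\eta(y,t)\,d\nu_t(y)$, feed its continuity in $t$, monotonicity in $\eta$ and decay as $\eta\to0$ into the same partition-of-unity construction of $\tilde\delta$, and obtain an error analysis with no exceptional sets at all: the oscillation over each $X_j(t)$, integrated against $\nu_t^j$, telescopes to $I(\tilde\delta(t),t)<\varepsilon_1$. Both routes are sound; yours is cleaner in the final estimate, while the paper's avoids the one nontrivial fact your argument leans on, namely that a weakly convergent sequence of Radon measures on a complete metric space is uniformly tight (needed to pass from $\int G_\eta(y,t_n)\,d\nu_{t_n}$ to $\int G_\eta(y,t)\,d\nu_t$); that fact is in \cite{B18} and should be cited explicitly. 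Two further minor divergences, both harmless: you apply Theorem \ref{t-appr} to the rescaled pair $(\mu|_{K_1},\mu(K_1)\nu_t)$ and therefore need the auxiliary coupling $\tilde\sigma_t$ to compare $\pi_t$ with a full optimal plan (exactly as in Theorem \ref{th1}), whereas the paper applies Theorem \ref{t-appr} directly to $(\mu,\nu_t)$ and disposes of $X\setminus K_1$ by pushing $\mu|_{X\setminus K_1}$ onto the projection $\nu_t^0$ of $I_{(X\setminus K_1)\times Y}\pi_t$; and your bookkeeping of the $\varepsilon_1$'s differs slightly from the paper's $\varepsilon_1=\varepsilon/6$, but the total is still $O(\varepsilon_1)$ with an explicit constant.
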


\begin{proof} 
The assertion of Theorem \ref{th2} reduces to the case where $h \le 1$. 
Let $\varepsilon > 0$. Set $\varepsilon_1 = \varepsilon/6$. 
Since the measure $\mu$ is non-atomic, there exists a compact set $K_1 \subset X$ such that $\mu(X \setminus K_1) < \varepsilon_1$ and $(K_1, \mu|_{K_1})$ is homeomorphic to $(S, \lambda)$, where $S \subset [0, 1]$ is a compact set and 
$\lambda$ is Lebesgue measure. Let $\varphi \colon S \to K_1$ be a homeomorphism, $\lambda|_S \circ \varphi^{-1} = \mu|_{K_1}$. Let $d_X$ and $d_Y$ be the metrics of $X$ and $Y$ respectively. 

Let us prove that there exists a continuous (strictly positive) function $\delta \colon T \to (0, +\infty)$ and a collection of closed sets $Y(t) \subset Y$, $t \in T$, such that for any $t \in T$ we have $\nu_t(Y \setminus Y(t)) < \varepsilon_1$ and
$|h(x_1, y, t) - h(x_2, y, t)| < \varepsilon_1$ for all $x_1, x_2 \in K_1$ with $d_X(x_1, x_2) < \delta(t)$ and for all $y \in Y(t)$. 

For any $t \in T$ take a compact set $K_2(t) \subset Y$ such that $\nu_t(Y \setminus K_2(t)) < \varepsilon_1$. Since $h$ is continuous on $K_1 \times Y \times T$, it follows that for any $t_0 \in T$ there exist real numbers $\kappa(t_0) > 0$, $r(t_0) > 0$ and an open neighbourhood $\tilde W_{t_0} \subset T$
($t_0 \in \tilde W_{t_0}$) such that $|h(x_1, y, t) - h(x_2, y, t)| < \varepsilon_1$ for any $x_1, x_2 \in K_1$ with $d_X(x_1, x_2) < \kappa(t_0)$ and for any $y \in K_2(t_0)^{r(t_0)}$ (where $B^r = \{y \in Y: d_Y(y, B) \le r\}$ is a closed $r$-neighbourhood of a set $B$ in the metric space $Y$), $t \in \tilde W_{t_0}$. Since the mapping $t \mapsto \nu_t$ is continuous in the weak topology and  
$\nu_{t_0}(Y \setminus K_2(t_0)) < \varepsilon_1$, there exists an oper neighbourhood $W'_{t_0} \subset T$ ($t_0 \in W'_{t_0}$) such that $\nu_t(Y \setminus K_2(t_0)^{r(t_0)}) < \varepsilon_1$ for any $t \in W'_{t_0}$. 
Set $W_{t_0} = \tilde W_{t_0} \cap W'_{t_0}$.  

The metric space $T$ posseses a locally finite continuous partition of unity \linebreak $\{\psi_{\alpha}, \alpha \in A\}$ subordinated to the open cover $\{W_t, t \in T\}$, i.e. a set of continuous functions $\psi_{\alpha}$, $\alpha \in A$, such that $0 \le \psi_{\alpha} \le 1$ for any $\alpha \in A$, $\supp \psi_{\alpha} \subset W_{\tau(\alpha)}$ for some $\tau(\alpha) \in T$, for every point $t \in T$ there exists a neighbourhood $W$ such that $W \cap \supp \psi_{\alpha} \neq \varnothing$ for at most finite number of indices $\alpha \in A$, and $\sum_{\alpha} \psi_{\alpha}(t) = 1$.

Set 
$$\delta(t) = \sum_{\alpha} \kappa(\tau(\alpha)) \psi_{\alpha}(t).$$ 
Then the function $\delta(t)$ is continuous, since for any point $t \in T$ there exists a neighbourhood $W$ such that $\delta(t)$ is equal to the sum of a finite number of continuous functions on $W$.
For any $t \in T$ choose an index $\alpha(t)$ from the finite set $\{\alpha \in A: \psi_{\alpha}(t) \neq 0\}$ for which
the value $\kappa(\tau(\alpha))$ is maximal. 
Set $$Y(t) = K_2(\tau(\alpha(t)))^{r(\tau(\alpha(t)))}.$$
Let us show that the function $\delta(t)$ and the sets $Y(t)$, $t \in T$, satisfy the required condition. 
Fix $t_0 \in T$. Let $\alpha_1, \dots, \alpha_N$ be all indices from the set $A$ such that $\psi_{\alpha_i}(t_0) \neq 0$. Then $t_0 \in W_{\tau(\alpha_i)}$ for all $i \in \{1, \dots, N\}$. 
Since $\sum_{\alpha} \psi_{\alpha}(t_0) = 1$, we have $\delta(t_0) \le \max(\kappa(\tau(\alpha_1)), \dots, \kappa(\tau(\alpha_N))) = \kappa(\tau(\alpha(t_0)))$. Therefore, by the definition of the numbers $\kappa(t)$ we obtain that 
$|h(x_1, y, t_0) - h(x_2, y, t_0)| < \varepsilon_1$ if $x_1, x_2 \in K_1$, $d_X(x_1, x_2) < \delta(t_0)$, $y \in Y(t_0)$. 
Moreover, $\nu_{t_0}(Y \setminus Y(t_0)) < \varepsilon_1$, because $t_0 \in W_{\tau(\alpha(t_0))}$. 

Since the mapping $\varphi$ is continuous, as proven above, there exists a continuous function $\tilde \delta \colon T \to (0, +\infty)$ and a collection of closed sets $Y(t) \subset Y$, $t \in T$, such that for any $t \in T$ we have $\nu_t(Y \setminus Y(t)) < \varepsilon_1$ and 
$|h(\varphi(s_1), y, t) - h(\varphi(s_2), y, t)| < \varepsilon_1$ for all $s_1, s_2 \in S$ with $|s_1 - s_2| \le \tilde \delta(t)$ and for all $y \in Y(t)$.

As described in the proof of Theorem \ref{th1}, we can construct a partition \linebreak $S~=~\bigsqcup_{j = 1}^{\infty} S_j(t)$ satisfying the following properties: 
\begin{itemize}
\item[1)] for any $j \in \mathbb N$ the mapping $t \mapsto I_{S_j(t)}$ is continuous in the sense of convergence $\lambda$-a.e., that is, for any sequence $t_n \to t$, $n \to \infty$, we have 
$I_{S_j(t_n)} \to I_{S_j(t)}$ $\lambda$-a.e.,
\item[2)] for any $j \in \mathbb N$ and for any $t \in T$ we have
$|h(\varphi(s_1), y, t) - h(\varphi(s_2), y, t)| < \varepsilon_1$ for all $s_1, s_2 \in S_j(t)$, $y \in Y(t)$.
\end{itemize}

Set $X_j(t) = \varphi(S_j(t))$. Then $K_1 = \bigsqcup_{j = 1}^{\infty} X_j(t)$. We have 
$I_{X_j(t_n)} \to I_{X_j(t)}$ $\mu$-a.e., if $t_n \to t$, $n \to \infty$ (this also implies that  
$\mu(X_j(t_n) \triangle X_j(t)) \to 0$ as $n \to \infty$). Furthermore, for any $j \in \mathbb N$ and for any $t \in T$ we have $|h(x_1, y, t) - h(x_2, y, t)| < \varepsilon_1$ for all $x_1, x_2 \in X_j(t)$, $y \in Y(t)$. 
Set $X_0(t) = X \setminus K_1$. 

By Theorem \ref{t-appr} there exist $\varepsilon_1$-optimal measures $\pi_t \in \Pi(\mu, \nu_t)$ for the cost function $h(x, y, t)$ such that $\pi_t$ is continuous in $t$ in the weak topology. Let $\nu^j_t$ be the projection of the measure $I_{X_j(t)} \pi_t$ on $Y$, $j \in \mathbb N \cup \{0\}$. Then $\nu^j_t$ is continuous in $t$ in the weak topology. Indeed, if $t_n \to t$ as $n \to \infty$, then the measures $\nu^j_{t_n}$ converge weakly to $\nu^j_t$, since
the measures $\pi_{t_n}$ converge weakly to $\pi_t$ and $\mu(X_j(t_n) \triangle X_j(t)) \to 0$. 

The complete metric space $Y$ posseses the strong Skorohod property for Radon measures (see \cite{B18}), 
that is, for any Radon probability measure $\eta$ on $Y$ there exists a mapping $\xi_{\eta} \colon [0, 1] \to Y$ such that
$\lambda \circ \xi_{\eta}^{-1} = \eta$, where $\lambda$ is Lebesgue measure on $[0, 1]$, and if measures $\eta_n$ converge weakly to $\eta$, then $\xi_{\eta_n} \to \xi_{\eta}$ $\lambda$-a.e.

Since the mapping $t \mapsto \nu^j_t$ is continuous in the weak topology for any $j \in \mathbb N \cup \{0\}$, by the strong Skorohod property for any $j \in \mathbb N \cup \{0\}$ there exists a mapping $\xi_{t, j} \colon [0, \mu(X_j(t))] \to Y$ (where $\mu(X_j(t)) = \lambda(S_j(t))$ for any $j \in \mathbb N$ and $\mu(X_0(t)) = \mu(X \setminus K_1)$) such that
$$\lambda|_{[0, \mu(X_j(t))]} \circ \xi_{t, j}^{-1} = \nu^j_t$$ 
and $\xi_{t, j}$ is continuous in $t$ in the sense of convergence $\lambda$-a.e. 
Let $$F^j_t(s) = \lambda([0, s] \cap S_j(t)), \quad j \in \mathbb N. $$ 
The mapping $t \mapsto F^j_t$ is continuous in $t$ in the topology of pointwise convergence: if $t_n \to t$ as $n \to \infty$, then
$F^j_{t_n}(s) \to F^j_t(s)$ for any $s \in S$. Indeed, $|F^j_{t_n}(s) - F^j_t(s)| \le \lambda(S_j(t_n) \triangle S_j(t)) \to 0$ as $n \to \infty$.  
Set
$$
T_t(x) = \xi_{t, j}(F^j_t(\varphi^{-1}(x))) \quad \mbox{if  } x \in X_j(t), j \in \mathbb N.
$$ 
Then $\mu|_{X_j(t)} \circ T_t^{-1} = \nu^j_t$, since $\varphi^{-1} \colon K_1 \to S$ is a homeomorphism which transfers the measure $\mu|_{X_j(t)}$ to the measure $\lambda|_{S_j(t)}$ and the mapping $F^j_t$ transfers
$\lambda|_{S_j(t)}$ to the measure $\lambda|_{[0, \lambda(S_j(t))]}$. 
Since the measure $\mu$ is non-atomic, there exists a mapping  
$F \colon X \setminus K_1 \to [0, \mu(X \setminus K_1)]$ such that 
$$\mu|_{X \setminus K_1} \circ F^{-1} = \lambda|_{[0, \mu(X \setminus K_1)]}.$$ 
Set $T_t(x) = \xi_{t, 0}(F(x))$ for any $x \in X \setminus K_1$. 
Then $\mu|_{X \setminus K_1} \circ T_t^{-1} = \nu_t^0$. 
Therefore, $\mu \circ T_t^{-1} = \nu_t$ for any $t \in T$. 

Let us show that the mapping $T_t$ is continuous in $t$ in the sense of convergence $\mu$-a.e. Let $t_n \to t$, $n \to \infty$. Prove that for any $j \in \mathbb N$ 
$$\mu(\{x \in X_j(t): T_{t_n}(x) \not \to T_t(x)\}) = 0. $$ 
Indeed, for $\mu$-a.e. $x \in X_j(t)$ it holds that $x \in X_j(t_n)$ for all sufficiently large $n$, 
since $I_{X_j(t_n)} \to I_{X_j(t)}$ $\mu$-a.e. Therefore, for $\mu$-a.e. $x \in X_j(t)$ for all sufficiently large $n$ we have
$$T_{t_n}(x) = \xi_{t_n, j}(F^j_{t_n}(\varphi^{-1}(x))) \to \xi_{t, j}(F^j_t(\varphi^{-1}(x))) = T_t(x), $$
since $F^j_{t_n}(\varphi^{-1}(x)) \to F^j_t(\varphi^{-1}(x))$ due to the continuity of $F^j_t$ in $t$
and $\xi_{t_n, j} \to \xi_{t, j}$ $\lambda$-a.e.
Moreover, 
$$\mu(\{x \in X \setminus K_1: T_{t_n}(x) \not \to T_t(x)\} = \lambda(\{s \in [0, \mu(X \setminus K_1)]: \xi_{t_n, 0}(s) \not \to \xi_{t, 0}(s)\}) = 0.$$
Therofore, $\mu(\{x \in X: T_{t_n}(x) \not \to T_t(x)\}) = 0$ and the mapping $T_t$ is continuous in $t$ in the sense of convergence $\mu$-a.e.

Let us prove that the mapping $T_t$ is $\varepsilon$-optimal for any $t \in T$. Fix $t \in T$. 
For any $j \in \mathbb N$ we have (fix some $x_0 \in X_j(t))$
\begin{multline*}
\Bigl|\int_{X_j(t)} h_t(x, T_t x) \mu(dx) - \int_Y h_t(x_0, y) \nu^j_t(dy) \Bigr| = \\ =
\Bigl|\int_{X_j(t)} (h_t(x, T_t x) - h_t(x_0, T_t x)) \mu(dx)\Bigr| < \varepsilon_1 \mu(X_j(t)) + \mu(X_j(t) \setminus T_t^{-1}(Y(t))), 
\end{multline*}
since $\mu|_{X_j(t)} \circ T_t^{-1} = \nu^j_t$ and 
$|h_t(x, y) - h_t(x_0, y)| < \varepsilon_1$ for any $x \in X_j(t)$, $y \in Y(t)$.
Similarly 
\begin{multline*}
\Bigl|\int_{X_j(t) \times Y} h_t(x, y) \pi_t(dx dy) - \int_Y h_t(x_0, y) \nu^j_t(dy)\Bigr| = \\ = 
\Bigl|\int_{X_j(t) \times Y} (h_t(x, y) - h_t(x_0, y)) \pi_t(dx dy)\Bigr| < \varepsilon_1 \mu(X_j(t)) + \pi_t(X_j(t) \times (Y \setminus Y(t))). 
\end{multline*}
Therefore, 
\begin{multline*}
\int_{X_j(t)} h_t(x, T_t x) \mu(dx) \le \int_{X_j(t) \times Y} h_t(x, y) \pi_t(dx dy) + 2 \varepsilon_1 \mu(X_j(t)) +  \\ 
+ \mu(X_j(t) \setminus T_t^{-1}(Y(t))) + \pi_t(X_j(t) \times (Y \setminus Y(t))).
\end{multline*}
Summing over $j \in \mathbb N$, we obtain the inequality 
\begin{multline*}
\int_{K_1} h_t(x, T_t x) \mu(dx) \le \int_{K_1 \times Y} h_t(x, y) \pi_t(dx dy) + 2 \varepsilon_1 + \mu(X \setminus T_t^{-1}(Y(t))) + 
\pi_t(X \times (Y \setminus Y(t))) =  \\ = \int_{K_1 \times Y} h_t(x, y) \pi_t(dx dy) + 2 \varepsilon_1 + 2 \nu_t(Y \setminus Y(t)) \le
\int_{K_1 \times Y} h_t(x, y) \pi_t(dx dy) + 4 \varepsilon_1.
\end{multline*}
Furthermore, 
$$
\int_{X \setminus K_1} h_t(x, T_t x) \mu(dx) \le \mu(X \setminus K_1) < \varepsilon_1. 
$$
Therefore, $$\int_{X} h_t(x, T_t x) \mu(dx) \le \int_{X \times Y} h_t(x, y) \pi_t(dx dy) + 5 \varepsilon_1.$$
Thus the mapping $T_t$ is $6\varepsilon_1$-optimal for every $t \in T$. 
\end{proof}

\begin{corollary} \label{cor1}
The statement of Theorem \ref{th2} holds true if we replace the condition that $X$ is a complete metric space by the condition that $X$ is a completely regular topological space and the measure $\mu$ is concentrated on a countable union of metrizable compact sets (i.e. we may assume that $X$ is a Souslin space).
\end{corollary}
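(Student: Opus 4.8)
The plan is to rerun the proof of Theorem~\ref{th2} essentially verbatim, with the single structural change that the global metric $d_X$ on $X$ is replaced by a metric on a suitable metrizable compact subset $K_1\subset X$, exactly as was done in passing to the Souslin setting in Theorem~\ref{th1}. Since only the hypothesis on $X$ is weakened (the target space $Y$ remains a complete metric space), the strong Skorohod property for $Y$ is still available, and every part of the argument of Theorem~\ref{th2} that concerns $Y$, the marginals $\nu_t$, and the parameter space $T$ can be quoted unchanged.

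First I would construct $K_1$ as in the opening of the proof of Theorem~\ref{th1}. Using that $\mu$ is concentrated on a countable union of metrizable compacts, choose a metrizable compact $\tilde K_1\subset X$ with $\mu(X\setminus\tilde K_1)<\varepsilon_1/2$; since $\mu$ is non-atomic and $\tilde K_1$ is metrizable, $(\tilde K_1,\mu|_{\tilde K_1})$ is almost homeomorphic to $([0,\mu(\tilde K_1)],\lambda)$ by \cite[Theorem~9.6.3]{B07}. Taking an almost homeomorphism $\varphi$ and a compact $S$ on which $\varphi$ is a genuine homeomorphism, set $K_1=\varphi(S)$; then $K_1$ is a metrizable compact set, $(K_1,\mu|_{K_1})$ is homeomorphic to $(S,\lambda)$ with $S\subset[0,1]$ compact, and $\mu(X\setminus K_1)<\varepsilon_1$. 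Fix a metric $d_{K_1}$ generating the topology of $K_1$.

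With $K_1$ and $d_{K_1}$ in hand, the construction of the continuous function $\delta\colon T\to(0,+\infty)$ and of the closed sets $Y(t)\subset Y$ goes through word for word as in Theorem~\ref{th2}: the function $h$ is continuous on the metrizable product $K_1\times Y\times T$, one chooses compacts $K_2(t)\subset Y$ with $\nu_t(Y\setminus K_2(t))<\varepsilon_1$, builds the numbers $\kappa(t_0)$, $r(t_0)$ and the neighbourhoods $W_{t_0}$, and assembles $\delta$ from a locally finite partition of unity subordinate to $\{W_t\}$, the only change being that $d_{K_1}$ replaces $d_X$ throughout. Transporting via $\varphi$ yields $\tilde\delta$ and the partition $S=\bigsqcup_{j=1}^{\infty} S_j(t)$ with properties 1) and 2), and hence the sets $X_j(t)=\varphi(S_j(t))$.

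The remainder is then identical to the proof of Theorem~\ref{th2}: the projections $\nu^j_t$ of $I_{X_j(t)}\pi_t$ are weakly continuous in $t$; the strong Skorohod property of the complete metric space $Y$ supplies maps $\xi_{t,j}$ continuous $\lambda$-a.e.\ in $t$; the maps $F^j_t$ and the resulting $T_t$ are defined as before; and the $\mu$-a.e.\ continuity in $t$ together with the $6\varepsilon_1$-optimality estimate carry over unchanged. The only point that genuinely uses the relaxed hypothesis is the extension of $T_t$ to $X\setminus K_1$: here $\mu|_{X\setminus K_1}$ is again non-atomic and concentrated on a countable union of metrizable compacts, so a Borel map $F\colon X\setminus K_1\to[0,\mu(X\setminus K_1)]$ with $\mu|_{X\setminus K_1}\circ F^{-1}=\lambda|_{[0,\mu(X\setminus K_1)]}$ exists, and one sets $T_t(x)=\xi_{t,0}(F(x))$ there. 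I expect the only real bookkeeping to be verifying that replacing $d_X$ by $d_{K_1}$ does not affect any estimate — which is immediate, since $d_X$ entered the proof of Theorem~\ref{th2} solely through the continuity of $h$ restricted to $K_1\times Y\times T$.
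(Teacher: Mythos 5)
Your overall strategy (restrict to a metrizable compact $K_1$ carrying most of $\mu$ and rerun the proof of Theorem~\ref{th2} with $d_{K_1}$ in place of $d_X$) is the right one, but there is a genuine gap: you have mislocated where the completeness of $X$ enters the proof of Theorem~\ref{th2}. It is not only in the extension of $T_t$ to $X\setminus K_1$; it is, above all, in the invocation of Theorem~\ref{t-appr}, which supplies the continuous-in-$t$ family of $\varepsilon_1$-optimal plans $\pi_t\in\Pi(\mu,\nu_t)$ and is stated only for complete metric spaces $X$ and $Y$. When you say the remainder is ``identical to the proof of Theorem~\ref{th2}'' and later use $\xi_{t,0}$ to parametrize $\nu^0_t$ (the projection of $I_{X\setminus K_1}\pi_t$), you are implicitly applying Theorem~\ref{t-appr} to the full marginals $(\mu,\nu_t)$ on a space $X$ that is merely completely regular; as stated, that theorem does not cover this case, and extending it is essentially the content of what the corollary asks you to prove. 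Your closing claim that the only point using the relaxed hypothesis is the treatment of $X\setminus K_1$ is therefore not accurate.

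The paper closes this gap by posing the auxiliary Kantorovich problem on $K_1\times Y$ itself: since $K_1$ is compact and metrizable, it is a complete metric space, so Theorem~\ref{t-appr} applies to the marginals $\mu|_{K_1}$ and $\mu(K_1)\nu_t$ and yields continuous $\varepsilon_1$-optimal plans $\pi_t\in\Pi(\mu|_{K_1},\mu(K_1)\nu_t)$. The map $T_t$ is then built on $K_1$ from these plans exactly as in Theorem~\ref{th2}, while on $X\setminus K_1$ the leftover mass is sent to $(1-\mu(K_1))\nu_t$ via a Skorohod parametrization $\xi_t$ of the weakly continuous curve $t\mapsto\nu_t$ --- not to a projection $\nu^0_t$ of an optimal plan, which is no longer available. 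This modification also forces the additional step, carried out in the proof of Theorem~\ref{th1} via the plan $\tilde\sigma$, of comparing the optimal cost of the restricted problem with $K_{h_t}(\mu,\nu_t)$, at the price of a further $O(\varepsilon_1)$ in the final estimate. Once you make this substitution, the rest of your argument (the construction of $\delta$, $Y(t)$, the partition $S=\bigsqcup_j S_j(t)$, and the a.e.\ continuity of $T_t$) goes through as you describe.
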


\begin{proof} 
Following the proof of Theorem \ref{th2} we construct the sets $Y(t)$ and partitions 
$K_1 = \bigsqcup_{j = 1}^{\infty} X_j(t)$, $t \in T$. According to Theorem \ref{t-appr}, consider $\varepsilon_1$-optimal measures $\pi_t~\in~\Pi(\mu|_{K_1}, \mu(K_1) \nu)$ in the Kantorovich problem for the measures $\mu|_{K_1}$ and $\mu(K_1) \nu$ with the cost function $h(x, y, t)$ such that $\pi_t$ is continuous in $t$ in the weak topology. Set $\nu^j_t = I_{X_j(t)} \pi_t$ for any $j \in \mathbb N$. Then $\nu^j_t$ is continuous in $t$ in the weak topology.  
Define the mapping $T_t$ on $K_1$ in the same way as in the proof of Theorem \ref{th2}, then we have 
$\mu|_{K_1} \circ T_t^{-1} = \mu(K_1) \nu_t$. Take a mapping $F \colon X \setminus K_1 \to [0, \mu(X \setminus K_1)]$ such that $$\mu|_{X \setminus K_1} \circ F^{-1} = \lambda|_{[0, \mu(X \setminus K_1)]}.$$ 
Set $T_t(x) = \xi_{t}(F(x))$ for any $x \in X \setminus K_1$, where $\xi_t \colon [0, \mu(X \setminus K_1)] \to Y$, 
$$\lambda|_{[0, \mu(X \setminus K_1)]} \circ \xi_t^{-1} = (1 - \mu(K_1))\nu_t$$ and $\xi_t$ is continuous in $t$ in the sense of convergence $\lambda$-a.e. Then $\mu \circ T_t^{-1} = \nu_t$, $T_t$ is continuous in $t$ in the sense of convergence $\mu$-a.e.
and $T_t$ is $\varepsilon$-optimal for every $t \in T$. 
\end{proof}

Consider now the most general case where the measures $\mu_t \in \mathcal P_r(X)$ and $\nu_t \in \mathcal P_r(Y)$ continuously depend on $t$. 
Assuming that the measures $\mu_t$ are continuous in $t$ in the total variation norm we prove the existence 
of approximate optimal Monge mappings continuously depending on the parameter $t$ in the sense of convergence $\mu_t$-a.e.

\begin{theorem} \label{th3}
Let $X$ be a complete separable metric space and let $Y$ be a complete metric space. Let $T$ be a metric space,
the mapping $t \mapsto \nu_t$, $T \to \mathcal P_r(Y)$, is continuous in the weak topology, the mapping $t \mapsto \mu_t$, $T \to \mathcal P_r(X)$, is continuous in the total variation norm, and the measures $\mu_t$ are non-atomic for all $t \in T$. 
Let 
$h \colon X \times Y \times T \to [0, \infty)$ be a continuous function such that $h(x, y, t) \le a_t(x) + b_t(y)$, where $a_t \in L^1(\mu_t)$, $b_t \in L^1(\nu_t)$ and
$$
\lim_{R \to +\infty} \sup_{t \in T} \Bigl(\int_{a_t \ge R} a_t d\mu_t + \int_{b_t \ge R} b_t d\nu_t \Bigr) = 0. 
$$
Then for any $\varepsilon > 0$ one can select $\varepsilon$-optimal Monge mappings $T_t^{\varepsilon}$ for the cost functions $h_t$ and measures $\mu_t$, $\nu_t$ (i.e. $\mu_t \circ (T_t^{\varepsilon})^{-1} = \nu_t$ for every $t \in T$) such that
$T_t^{\varepsilon}$ is continuous in $t$ in the sense of convergence $\mu_t$-a.e.: if $t_n \to t$ as $n \to \infty$, then $T_{t_n}^{\varepsilon} \to T_t^{\varepsilon}$ $\mu_t$-a.e.
\end{theorem}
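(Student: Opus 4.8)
The plan is to follow the scheme of Theorem~\ref{th2} and Corollary~\ref{cor1}, the only genuinely new feature being that the source measure now moves with $t$. As in the proofs of Theorems~\ref{th1} and~\ref{th2}, I would first reduce to a bounded cost, say $h\le 1$: from the domination $h(x,y,t)\le a_t(x)+b_t(y)$ and the uniform integrability hypothesis, the difference between $\int h_t\,d\sigma$ and $\int\min(h_t,N)\,d\sigma$ is at most $2\int_{a_t\ge N/2}a_t\,d\mu_t+2\int_{b_t\ge N/2}b_t\,d\nu_t$, uniformly in $t$ and in $\sigma\in\Pi(\mu_t,\nu_t)$, so it suffices to build continuous $\varepsilon/2$-optimal Monge maps for $\min(h,N)$ with $N$ large. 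Set $\varepsilon_1=\varepsilon/6$.

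The essential point is to replace the fixed measure isomorphism $\varphi\colon S\to K_1$ of Theorem~\ref{th2} (available for free there because $\mu$ did not move) by a family that depends continuously on $t$ and is still genuinely invertible, so that $T_t$ can be defined on $X$ and not merely on a parameter interval. Here total variation continuity --- rather than mere weak continuity --- is exactly what is needed. Since $X$ is complete separable, fix once and for all a Borel isomorphism of $X$ with a subset of $[0,1]$ and use it to order the points of $X$. For a Borel set $B$ put $Q^B_t(x)=\mu_t\bigl(B\cap\{x'\le x\}\bigr)$. As each $\mu_t$ is non-atomic, $Q^B_t$ is the quantile map of $\mu_t|_B$ and pushes $\mu_t|_B$ onto Lebesgue measure on $[0,\mu_t(B)]$; crucially, for any two parameters
\[
\sup_{x}\bigl|Q^B_t(x)-Q^B_{t'}(x)\bigr|\le \|\mu_t-\mu_{t'}\|,
\]
so $t\mapsto Q^B_t$ is continuous \emph{uniformly} in $x$, and the lengths $\mu_t(B)$ are continuous in $t$. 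This map plays the role of the composition $\varphi^{-1}$ followed by $F^j_t$ in Theorem~\ref{th2}, but is now a bona fide $\mu_t$-a.e.\ invertible parametrization of $(X,\mu_t)$ that is continuous in $t$.

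Next I would recover the oscillation control of Theorem~\ref{th2}, which is a metric (not a measure-theoretic) requirement. Running verbatim its $t$-partition-of-unity argument gives a continuous $\delta\colon T\to(0,+\infty)$, closed sets $Y(t)\subset Y$ with $\nu_t(Y\setminus Y(t))<\varepsilon_1$ and $|h(x_1,y,t)-h(x_2,y,t)|<\varepsilon_1$ whenever $x_1,x_2$ lie in a common set of $d_X$-diameter $<\delta(t)$ and $y\in Y(t)$, and, by the same device, compacts of $\mu_t$-measure $>1-\varepsilon_1$ whose exteriors I denote $X_0(t)$. I would then cut $X$ into pieces $X_j(t)$, $j\ge1$, of $d_X$-diameter $<\delta(t)$ on which $h(\cdot,y,t)$ oscillates by less than $\varepsilon_1$ for $y\in Y(t)$, arranged --- exactly as in the partition construction of Theorem~\ref{th2}, with the scale governed by the continuous $\delta(t)$ --- so that $t\mapsto I_{X_j(t)}$ is continuous $\mu_t$-a.e.\ and $\mu_t(X_j(t_n)\triangle X_j(t))\to0$; total variation continuity of $\mu_t$ is what makes this last convergence hold when both the set and the measure move. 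By Theorem~\ref{t-appr} there are $\varepsilon_1$-optimal plans $\pi_t\in\Pi(\mu_t,\nu_t)$ weakly continuous in $t$; letting $\nu^j_t$ be the $Y$-projection of $I_{X_j(t)}\pi_t$ (weakly continuous by the same combination of total variation and weak convergence), the strong Skorohod property of the complete metric space $Y$ yields maps $\xi_{t,j}\colon[0,\mu_t(X_j(t))]\to Y$ with $\lambda\circ\xi_{t,j}^{-1}=\nu^j_t$ and $\xi_{t_n,j}\to\xi_{t,j}$ $\lambda$-a.e. I set $T_t(x)=\xi_{t,j}\bigl(Q^{X_j(t)}_t(x)\bigr)$ for $x\in X_j(t)$, $j\ge0$.

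It remains to verify the three conclusions, and this is where the new difficulties concentrate. That $\mu_t\circ T_t^{-1}=\nu_t$ is immediate from the two pushforward identities. For continuity, for $\mu_t$-a.e.\ $x$ one has $x\in X_j(t_n)$ for all large $n$, and the inner term converges since $|Q^{X_j(t_n)}_{t_n}(x)-Q^{X_j(t)}_{t}(x)|\le \mu_{t_n}(X_j(t_n)\triangle X_j(t))+\|\mu_{t_n}-\mu_t\|\to0$ --- both pieces controlled by total variation --- while the outer Skorohod maps converge $\lambda$-a.e., so $T_{t_n}(x)\to T_t(x)$ $\mu_t$-a.e. The $\varepsilon$-optimality is then obtained exactly as in Theorem~\ref{th2}: on each piece $\int_{X_j(t)}h_t(x,T_tx)\,\mu_t(dx)$ and $\int_{X_j(t)\times Y}h_t\,d\pi_t$ differ from the common value $\int h_t(x_0,\cdot)\,d\nu^j_t$ by at most $\varepsilon_1\mu_t(X_j(t))$ plus the error terms $\mu_t(X_j(t)\setminus T_t^{-1}(Y(t)))$ and $\pi_t(X_j(t)\times(Y\setminus Y(t)))$, whose sums over $j$ are bounded by $2\nu_t(Y\setminus Y(t))<2\varepsilon_1$; adding the contribution $<\varepsilon_1$ of $X_0(t)$ gives a $6\varepsilon_1$-optimal map. \textbf{The main obstacle} I expect is precisely the reconciliation carried out above: the parametrization used to define $T_t$ on $X$ must be continuous in $t$ and invertible, which forces the quantile construction and hence the use of total variation (not merely weak) continuity of $\mu_t$, whereas the oscillation estimate is metric and forces the pieces $X_j(t)$ to have small $d_X$-diameter at the scale $\delta(t)$. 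Keeping the two requirements compatible --- so that $Q^{X_j(t)}_t$ stays continuous in $t$ and the composed maps converge $\mu_t$-a.e.\ even though the measure, the partition, and the target marginals all move --- is the delicate part.
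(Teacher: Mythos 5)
Your skeleton (reduction to bounded $h$, the partition-of-unity construction of $\delta(t)$, $X(t)$, $Y(t)$, the $\varepsilon_1$-optimal plans $\pi_t$ from Theorem \ref{t-appr}, the projections $\nu^j_t$, the strong Skorohod maps $\xi_{t,j}$, and the final bookkeeping) matches the paper, and your quantile maps $Q^B_t$ with $\sup_x|Q^B_t(x)-Q^B_{t'}(x)|\le\|\mu_t-\mu_{t'}\|$ are a correct substitute for the paper's maps $F^j_t(s)=\gamma_t([0,s]\cap S_j(t))$. But there is a genuine gap exactly at the point you yourself flag as ``the delicate part'': the construction of the partition $X=\bigsqcup_j X_j(t)$. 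You ask for pieces that simultaneously (i) have $d_X$-diameter $<\delta(t)$, so that $h(\cdot,y,t)$ oscillates by less than $\varepsilon_1$, (ii) have indicators continuous in $t$ in the $\mu_t$-a.e.\ sense, and (iii) cover all of $X$ for every $t$, and you propose to obtain them ``exactly as in the partition construction of Theorem~\ref{th2}''. That construction, however, cuts the \emph{parametrizing} set $S\subset[0,1]$ into intervals of length $\tilde\delta(t)$ and transports them by a \emph{homeomorphism} $\varphi$ onto a single compact set $K_1$ of large $\mu$-measure: the a.e.\ continuity of the indicators comes from the fact that the interval endpoints are null for a non-atomic measure on the line, and the oscillation control comes from the joint continuity of $h(\varphi(s),y,t)$. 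Your ordering of $X$ is only a \emph{Borel} isomorphism with a subset of $[0,1]$, so cutting by that order gives pieces with no control on $d_X$-diameter and no oscillation control; cutting $X$ directly into metric pieces of diameter $<\delta(t)$ (say, disjointified balls around a dense sequence) fails requirement (ii), because the spheres of radius $\delta(t)/2$ need not be $\mu_t$-null. Moreover, in the setting of Theorem~\ref{th3} no single compact set carries all the measures $\mu_t$ up to $\varepsilon_1$, so the Theorem~\ref{th2} device of a fixed $K_1$ with a fixed leftover piece is unavailable, and your $X_0(t)=X\setminus X(t)$ jumps discontinuously in $t$ (it is determined by a locally finite selection from the partition of unity), so it cannot serve as a partition piece with a.e.-continuous indicator.

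The paper's resolution, which is the one idea missing from your proposal, is to realize $X\subset[0,1]^\infty$ and write $[0,1]^\infty$ as a \emph{continuous} image of the Cantor set $C$, $f\colon C\to[0,1]^\infty$, with a Borel right inverse $g$. Then $h(f(s),y,t)$ is still jointly continuous, the partition-of-unity argument runs in the $s$-coordinate, and the intervals $[(j-1)\tilde\delta(t),j\tilde\delta(t))$ cut $S=g(X)$ into pieces $S_j(t)$ whose endpoints are $\gamma_t$-null (where $\gamma_t=\mu_t\circ g^{-1}$), which gives (ii) and (iii). The oscillation control (i) is then only available on $X(t)\cap X_j(t)$; the part of $X_j(t)$ outside the compact set $X(t)$ is not split off as a separate partition piece but absorbed into the optimality estimate as an extra error term $2\mu_t(X\setminus X(t))<2\varepsilon_1$ (whence the paper's choice $\varepsilon_1=\varepsilon/7$ rather than your $\varepsilon/6$). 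Without this or an equivalent device your partition does not exist as described, and the remainder of your argument has nothing to stand on.
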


\begin{proof}
The assertion of Theorem \ref{th3} reduces to the case where $h \le 1$. 
Let $\varepsilon > 0$. Set $\varepsilon_1 = \varepsilon/7$. 
Since every complete separable metric space is homeomorphic to a $G_{\delta}$-set in $[0, 1]^{\infty}$ (see \cite{Engelking}), we may assume that  $X \subset [0, 1]^{\infty}$. 
The compact metrizable space $[0, 1]^{\infty}$ is a continuous image of the Cantor set $C$, i.e. there exists a surjective continuous mapping $f \colon C \to [0, 1]^{\infty}$. By measurable selection theorem (see \cite{B07}) there exists a Borel measurable mapping $g \colon [0, 1]^{\infty} \to C$ such that $f(g(x)) = x$ for all $x \in [0, 1]^{\infty}$.
Set $\gamma_t = \mu_t \circ g^{-1}$, $t \in T$. Then $\mu_t = \gamma_t \circ f^{-1}$ for every $t \in T$ and the measures $\gamma_t$ are non-atomic. Moreover, the mapping $t \mapsto \gamma_t$ is continuous in the total variation norm, since
$\|\gamma_t - \gamma_{\tau}\| = \|(\mu_t - \mu_{\tau}) \circ g^{-1}\| \le \|\mu_t - \mu_{\tau}\|$ for any $t, \tau \in T$.  
Set $S = g(X)$. Then $S$ is a Borel subset of $C$. 
Let $d_X$ and $d_Y$ be the metrics on $X$ and $Y$ respectively. 

Let us prove that there exists a continuous (strictly positive) function $\delta \colon T \to (0, +\infty)$ and a collection of compact sets $X(t) \subset X$ and closed sets $Y(t) \subset Y$, $t \in T$, such that for any $t \in T$ we have  
$\mu_t(X \setminus X(t)) < \varepsilon_1$, $\nu_t(Y \setminus Y(t)) < \varepsilon_1$ and 
$|h(x_1, y, t) - h(x_2, y, t)| < \varepsilon_1$ for any $x_1, x_2 \in X(t)$ with $d_X(x_1, x_2) < \delta(t)$ and for any 
$y \in Y(t)$.  

For every $t \in T$ take compact sets $K_1(t) \subset X$ and $K_2(t) \subset Y$ such that 
$\mu_t(X \setminus K_1(t)) < \varepsilon_1$ and $\nu_t(Y \setminus K_2(t)) < \varepsilon_1$. 
Since $h$ is continuous on $X \times Y \times T$, for any $t_0 \in T$ there exist real numbers $\kappa(t_0) > 0$, $r(t_0) > 0$ and an open neighbourhood $\tilde W_{t_0} \subset T$
($t_0 \in \tilde W_{t_0}$) such that $|h(x_1, y, t) - h(x_2, y, t)| < \varepsilon_1$ for any $x_1, x_2 \in K_1(t_0)$ with $d_X(x_1, x_2) < \kappa(t_0)$ and for any $y \in K_2(t_0)^{r(t_0)}$ (where $B^r = \{y \in Y: d_Y(y, B) \le r\}$ is a closed  $r$-neighbourhood of a set $B$ in the metric space $Y$), $t \in \tilde W_{t_0}$. Since the mapping $t \mapsto \nu_t$ is continuous in the weak topology and $\nu_{t_0}(Y \setminus K_2(t_0)) < \varepsilon_1$, there exists an open neighbourhood  $W'_{t_0} \subset T$ ($t_0 \in W'_{t_0}$) such that $\nu_t(Y \setminus K_2(t_0)^{r(t_0)}) < \varepsilon_1$ for any $t \in W'_{t_0}$. 
Since the mapping $t \mapsto \mu_t$ is continuous in the total variation norm, there exists an open neighbourhood $W''_{t_0} \subset T$ ($t_0 \in W''_{t_0}$) such that
$\mu_t(X \setminus K_1(t_0)) < \varepsilon_1$ for any $t \in W''_{t_0}$. 
Set $W_{t_0} = \tilde W_{t_0} \cap W'_{t_0} \cap W''_{t_0}$.  

The metric space $T$ posseses a locally finite continuous partition of unity \linebreak $\{\psi_{\alpha}, \alpha \in A\}$ subordinated to the open cover $\{W_t, t \in T\}$, i.e. a set of continuous functions $\psi_{\alpha}$, $\alpha \in A$, such that $0 \le \psi_{\alpha} \le 1$ for any $\alpha \in A$, $\supp \psi_{\alpha} \subset W_{\tau(\alpha)}$ for some $\tau(\alpha) \in T$, for every point $t \in T$ there exists a neighbourhood $W$ such that $W \cap \supp \psi_{\alpha} \neq \varnothing$ for at most finite number of indices $\alpha \in A$, and $\sum_{\alpha} \psi_{\alpha}(t) = 1$.

Set 
$$\delta(t) = \sum_{\alpha} \kappa(\tau(\alpha)) \psi_{\alpha}(t).$$ 
Then the function $\delta(t)$ is continuous, since for any point $t \in T$ there exists a neighbourhood $W$ such that $\delta(t)$ is equal to the sum of a finite number of continuous functions on $W$.
For any $t \in T$ choose an index $\alpha(t)$ from the finite set $\{\alpha \in A: \psi_{\alpha}(t) \neq 0\}$ for which
the value $\kappa(\tau(\alpha))$ is maximal. 
Set 
$$X(t) = K_1(\tau(\alpha(t))),  \quad Y(t) = K_2(\tau(\alpha(t)))^{r(\tau(\alpha(t)))}. $$
Let us show that the function $\delta(t)$ and the sets $X(t)$, $Y(t)$, $t \in T$, satisfy the required condition. 
Fix $t_0 \in T$. Let $\alpha_1, \dots, \alpha_N$ be all indices from the set $A$ such that $\psi_{\alpha_i}(t_0) \neq 0$. 
Then $t_0 \in W_{\tau(\alpha_i)}$ for all $i \in \{1, \dots, N\}$. 
Since $\sum_{\alpha} \psi_{\alpha}(t_0) = 1$, we have $\delta(t_0) \le \max(\kappa(\tau(\alpha_1)), \dots, \kappa(\tau(\alpha_N))) = \kappa(\tau(\alpha(t_0)))$. 
Therefore, by the definition of the numbers $\kappa(t)$ we obtain that 
$|h(x_1, y, t_0) - h(x_2, y, t_0)| < \varepsilon_1$ if $x_1, x_2 \in X(t_0)$, $d_X(x_1, x_2) < \delta(t_0)$, $y \in Y(t_0)$. 
Moreover, $\mu_{t_0}(X \setminus X(t_0)) < \varepsilon_1$ and $\nu_{t_0}(Y \setminus Y(t_0)) < \varepsilon_1$, 
because $t_0 \in W_{\tau(\alpha(t_0))}$.

Since the mapping $f$ is continuous, the function $h(f(s), y, t)$ is continuous on $S \times Y \times T$. As proven above,  
 there exists a continuous function $\tilde \delta \colon T \to (0, +\infty)$ and a collection of sets  
$S(t) \subset S$, $Y(t) \subset Y$, $t \in T$, such that for any $t \in T$ we have $\gamma_t(S \setminus S(t)) < \varepsilon_1$, $\nu_t(Y \setminus Y(t)) < \varepsilon_1$ and 
$|h(f(s_1), y, t) - h(f(s_2), y, t)| < \varepsilon_1$ for all $s_1, s_2 \in S(t)$ with $|s_1 - s_2| \le \tilde \delta(t)$ and for all $y \in Y(t)$.  

As described in the proof of Theorem \ref{th1}, we can construct a partition \linebreak $S~=~\bigsqcup_{j = 1}^{\infty} S_j(t)$ satisfying the following properties: 
\begin{itemize}
\item[1)] for any $j \in \mathbb N$ the mapping $t \mapsto I_{S_j(t)}$ is continuous in the sense of convergence $\gamma_t$-a.e., that is, for any sequence $t_n \to t$, $n \to \infty$, we have 
$I_{S_j(t_n)} \to I_{S_j(t)}$ $\gamma_t$-a.e.,
\item[2)] for any $j \in \mathbb N$ and for any $t \in T$ we have
$|h(f(s_1), y, t) - h(f(s_2), y, t)| < \varepsilon_1$ for all $s_1, s_2 \in S(t) \cap S_j(t)$, $y \in Y(t)$.
\end{itemize}

Set $X(t) = f(S(t))$ and $X_j(t) = f(S_j(t))$, $j \in \mathbb N$. Then $X = \bigsqcup_{j = 1}^{\infty} X_j(t)$. We have 
$I_{X_j(t_n)} \to I_{X_j(t)}$ $\mu_t$-a.e.., if $t_n \to t$, $n \to \infty$ (this also implies that  
$\mu_t(X_j(t_n) \triangle X_j(t)) \to 0$ as $n \to \infty$). Furthermore, for any $j \in \mathbb N$ and for any $t \in T$ we have $|h(x_1, y, t) - h(x_2, y, t)| < \varepsilon_1$ for all $x_1, x_2 \in X(t) \cap X_j(t)$, $y \in Y(t)$. 

By Theorem \ref{t-appr} there exist $\varepsilon_1$-optimal measures $\pi_t \in \Pi(\mu_t, \nu_t)$ for the cost function $h(x, y, t)$ such that $\pi_t$ is continuous in $t$ in the weak topology. Let $\nu^j_t$ be the projection of the measure $I_{X_j(t)} \pi_t$ on $Y$, $j \in \mathbb N$.
Let us show that $\nu^j_t$ is continuous in $t$ in the weak topology. Let $t_n \to t$ as $n \to \infty$, we show that the measures $\nu^j_{t_n}$ converge weakly to $\nu^j_t$. 
We have
$$
\|I_{X_j(t_n)} \pi_{t_n} - I_{X_j(t)} \pi_{t_n} \| = \mu_{t_n}(X_j(t_n) \triangle X_j(t)) \le 
\|\mu_{t_n} - \mu_t\| + \mu_t(X_j(t_n) \triangle X_j(t)) \to 0, 
$$
since the mapping $t \mapsto \mu_t$ is continuous in the total variation norm. 
Let us prove that the measures $I_{X_j(t)} \pi_{t_n}$ converge weakly to $I_{X_j(t)} \pi_t$. 
Let $\zeta \in C_b(X \times Y)$, $|\zeta| \le 1$, we show that 
$$\int_{X \times Y} \zeta(x, y) I_{X_j(t)} \pi_{t_n}(dx dy) \to \int_{X \times Y} \zeta(x, y) I_{X_j(t)} \pi_t(dx dy).$$
Fix $\delta > 0$. Take a compact set $F_j$ and an open set $U_j$ such that $F_j \subset X_j(t) \subset U_j$ and $\mu_t(U_j \setminus F_j) < \delta$. There exist a continuous function $\chi \colon X \to \mathbb R$ such that $\chi = 1$ on $F_j$, $\chi = 0$ outside $U_j$, $0 \le \chi \le 1$. 
Then 
$$
\int_{X \times Y} \zeta(x, y) \chi(x) \pi_{t_n}(dx dy) \to \int_{X \times Y} \zeta(x, y) \chi(x) \pi_t(dx dy),
$$ 
since the measures $\pi_{t_n}$ converge weakly to $\pi_t$. Furthermore, 
\begin{multline*}
\Bigl|\int_{X \times Y} \zeta(x, y) I_{X_j(t)} \pi_{t_n}(dx dy)  - \int_{X \times Y} \zeta(x, y) \chi(x) \pi_{t_n}(dx dy)\Bigr| \le \\ \le \int_{X \times Y} I_{U_j \setminus F_j} \pi_{t_n}(dx dy) = \mu_{t_n}(U_j \setminus F_j) \le \|\mu_{t_n} - \mu_t\| + \mu_t(U_j \setminus F_j), 
\end{multline*}
since $|I_{X_j(t)} - \chi| \le I_{U_j \setminus F_j}$ and $|\zeta| \le 1$.  
Therefore, 
\begin{multline*}
\Bigl|\int_{X \times Y} \zeta(x, y) I_{X_j(t)} \pi_{t_n}(dx dy) - \int_{X \times Y} \zeta(x, y) I_{X_j(t)} \pi_t(dx dy)\Bigr| 
\le \\ \le
\Bigl|\int_{X \times Y} \zeta(x, y) \chi(x) \pi_{t_n}(dx dy) - \int_{X \times Y} \zeta(x, y) \chi(x) \pi_t(dx dy)\Bigr| +  \|\mu_{t_n} - \mu_t\| + 2 \delta. 
\end{multline*}
Hence we obtain that $\int_{X \times Y} \zeta(x, y) I_{X_j(t)} \pi_{t_n}(dx dy) - \int_{X \times Y} \zeta(x, y) I_{X_j(t)} \pi_t(dx dy) \to 0$.
Therefore, the measures $\nu^j_{t_n}$ converge weakly to $\nu^j_t$, i.e. the mapping $t \mapsto \nu^j_t$ is continuous in $t$ in the weak topology. 

The complete metric space $Y$ posseses the strong Skorohod property for Radon measures,  
that is, for any Radon probability measure $\eta$ on $Y$ there exists a mapping $\xi_{\eta} \colon [0, 1] \to Y$ such that
$\lambda \circ \xi_{\eta}^{-1} = \eta$, where $\lambda$ is Lebesgue measure on $[0, 1]$, and if measures $\eta_n$ converge weakly to $\eta$, then $\xi_{\eta_n} \to \xi_{\eta}$ $\lambda$-a.e. 

Since the mapping $t \mapsto \nu^j_t$ is continuous in the weak topology for any $j \in \mathbb N$, by the strong Skorohod property for any $j \in \mathbb N$ there exists a mapping $\xi_{t, j} \colon [0, \mu_t(X_j(t))] \to Y$ (where $\mu_t(X_j(t)) = \gamma_t(S_j(t))$ for any $j \in \mathbb N$) such that
$$\lambda|_{[0, \mu_t(X_j(t))]} \circ \xi_{t, j}^{-1} = \nu^j_t$$ 
and $\xi_{t, j}$ is continuous in $t$ in the sense of convergence $\lambda$-a.e. 
Let $$F^j_t(s) = \gamma_t([0, s] \cap S_j(t)), \quad j \in \mathbb N. $$ 
The mapping $t \mapsto F^j_t$ is continuous in $t$ in the topology of pointwise convergence: if $t_n \to t$ as $n \to \infty$, then $F^j_{t_n}(s) \to F^j_t(s)$ for any $s \in S$.
Indeed, 
$$
|F^j_{t_n}(s) - F^j_t(s)| \le 
\|\gamma_{t_n} - \gamma_t\| + \gamma_t(S_j(t_n) \triangle S_j(t)) \to 0, \quad n \to \infty.
$$
Set
$$
T_t(x) = \xi_{t, j}(F^j_t(g(x))) \quad \mbox{if  } x \in X_j(t), j \in \mathbb N. 
$$
Then $\mu_t|_{X_j(t)} \circ T_t^{-1} = \nu^j_t$, since the mapping $g$ transfers the measure $\mu_t|_{X_j(t)}$ to the measure $\gamma_t|_{S_j(t)}$ and the mapping $F^j_t$ transfers the measure 
$\gamma_t|_{S_j(t)}$ to the measure $\lambda|_{[0, \mu_t(X_j(t))]}$. 
Therefore, $\mu_t \circ T_t^{-1} = \nu_t$ for any $t \in T$. 

Let us show that the mapping $T_t$ is continuous in $t$ in the sense of convergence $\mu_t$-a.e. 
Let $t_n \to t$, $n \to \infty$. Prove that for any $j \in \mathbb N$ 
$$\mu_t(\{x \in X_j(t): T_{t_n}(x) \not \to T_t(x)\}) = 0.$$ 
Indeed, for $\mu_t$-a.e. $x \in X_j(t)$ it holds that $x \in X_j(t_n)$ for all sufficiently large $n$, 
since $I_{X_j(t_n)} \to I_{X_j(t)}$ $\mu_t$-a.e. 
Therefore, for $\mu_t$-a.e. $x \in X_j(t)$ for all sufficiently large $n$ we have
$$T_{t_n}(x) = \xi_{t_n, j}(F^j_{t_n}(g(x))) \to \xi_{t, j}(F^j_t(g(x))) = T_t(x),$$ 
since $F^j_{t_n}(g(x)) \to F^j_t(g(x))$ due to the continuity of $F^j_t$ in $t$
and $\xi_{t_n, j} \to \xi_{t, j}$ $\lambda$-a.e.
Therofore, $\mu_t(\{x \in X: T_{t_n}(x) \not \to T_t(x)\}) = 0$ and the mapping $T_t$ is continuous in $t$ in the sense of convergence $\mu_t$-a.e.

Let us prove that the mapping $T_t$ is $\varepsilon$-optimal for any $t \in T$. Fix $t \in T$. 
For any $j \in \mathbb N$ we have (fix some $x_0 \in X_j(t) \cap X(t))$
\begin{multline*}
\Bigl|\int_{X_j(t)} h_t(x, T_t x) \mu_t(dx) - \int_Y h_t(x_0, y) \nu^j_t(dy) \Bigr| = 
\Bigl|\int_{X_j(t)} (h_t(x, T_t x) - h_t(x_0, T_t x)) \mu_t(dx)\Bigr| \le \\ \le
\Bigl|\int_{X_j(t) \cap X(t)} (h_t(x, T_t x) - h_t(x_0, T_t x)) \mu_t(dx)\Bigr| + \mu_t(X_j(t) \setminus X(t))
< \\ < \varepsilon_1 \mu_t(X_j(t)) + 
\mu_t(X_j(t) \setminus T_t^{-1}(Y(t))) + \mu_t(X_j(t) \setminus X(t)), 
\end{multline*}
since $\mu_t|_{X_j(t)} \circ T_t^{-1} = \nu^j_t$ and 
$|h_t(x, y) - h_t(x_0, y)| < \varepsilon_1$ for any $x \in X_j(t) \cap X(t)$, $y \in Y(t)$.
Similarly
\begin{multline*}
\Bigl|\int_{X_j(t) \times Y} h_t(x, y) \pi_t(dx dy) - \int_Y h_t(x_0, y) \nu^j_t(dy)\Bigr| = 
\Bigl|\int_{X_j(t) \times Y} (h_t(x, y) - h_t(x_0, y)) \pi_t(dx dy)\Bigr| < \\ < \varepsilon_1 \mu_t(X_j(t)) + \pi_t(X_j(t) \times (Y \setminus Y(t))) + \pi_t((X_j(t) \setminus X(t)) \times Y). 
\end{multline*}
Therefore, 
\begin{multline*}
\int_{X_j(t)} h_t(x, T_t x) \mu_t(dx) \le \int_{X_j(t) \times Y} h_t(x, y) \pi_t(dx dy) + 2 \varepsilon_1 \mu_t(X_j(t)) +  \\ 
+ \mu_t(X_j(t) \setminus T_t^{-1}(Y(t))) + \pi_t(X_j(t) \times (Y \setminus Y(t))) + 2 \mu_t(X_j(t) \setminus X(t)). 
\end{multline*}
Summing over $j \in \mathbb N$, we obtain that 
\begin{multline*}
\int_{X} h_t(x, T_t x) \mu_t(dx) \le \int_{X \times Y} h_t(x, y) \pi_t(dx dy) + 2 \varepsilon_1 +  2 \mu_t(X \setminus X(t)) + \\
 + \mu_t(X \setminus T_t^{-1}(Y(t))) + \pi_t(X \times (Y \setminus Y(t))) 
= \int_{X \times Y} h_t(x, y) \pi_t(dx dy) + \\ + 2 \varepsilon_1 + 2 \mu_t(X \setminus X(t)) + 2 \nu_t(Y \setminus Y(t)) \le
\int_{X \times Y} h_t(x, y) \pi_t(dx dy) + 6 \varepsilon_1.
\end{multline*}
Therefore, the mapping $T_t$ is $7\varepsilon_1$-optimal for every $t \in T$. 
\end{proof}

\begin{corollary} \label{cor2}
The statement of Theorem \ref{th3} holds true in the case where $X$ is a Souslin space. 
\end{corollary}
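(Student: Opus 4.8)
The plan is to reduce the statement directly to Theorem \ref{th3}, applied over a complete separable metric (Polish) space, by representing the Souslin space $X$ as a continuous image of a Polish space and transporting the entire problem along a measurable section. Since a Souslin space is, by definition, a continuous image of a Polish space, I would fix a Polish space $Z$ together with a continuous surjection $p \colon Z \to X$ (both independent of $t$). The set $\{(x, z) \in X \times Z \colon p(z) = x\}$ is Borel and projects onto $X$, so the measurable selection theorem (see \cite{B07}) supplies a measurable section $q \colon X \to Z$ with $p(q(x)) = x$ for all $x$. The key elementary observation is that such a $q$ is automatically injective: $q(x_1) = q(x_2)$ forces $x_1 = p(q(x_1)) = p(q(x_2)) = x_2$.

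Next I would push the data to $Z$, setting $\gamma_t = \mu_t \circ q^{-1}$, $\tilde h(z, y, t) = h(p(z), y, t)$ and $\tilde a_t = a_t \circ p$, and check that $(Z, Y, \gamma_t, \nu_t, \tilde h)$ meets the hypotheses of Theorem \ref{th3}. Most verifications are routine: $\gamma_t$ is a Radon probability measure on the Polish space $Z$; the map $t \mapsto \gamma_t$ is total-variation continuous because $\|\gamma_t - \gamma_\tau\| \le \|\mu_t - \mu_\tau\|$; $\tilde h$ is continuous and dominated by $\tilde a_t(z) + b_t(y)$; and, using $p(q(x)) = x$, the identity $\int_{\tilde a_t \ge R} \tilde a_t \, d\gamma_t = \int_{a_t \ge R} a_t \, d\mu_t$ transfers the uniform integrability condition verbatim. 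The one substantive point is non-atomicity of $\gamma_t$, and here injectivity of $q$ does the work: each fibre $q^{-1}(z)$ is at most one point, so $\gamma_t(\{z\}) = \mu_t(q^{-1}(z)) = 0$.

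Applying Theorem \ref{th3} over $Z$ then yields $\varepsilon$-optimal Monge mappings $\tilde T_t^\varepsilon \colon Z \to Y$ for $\tilde h_t$, $\gamma_t$, $\nu_t$, continuous in $t$ $\gamma_t$-a.e. I would set $T_t^\varepsilon = \tilde T_t^\varepsilon \circ q$ and read off the three required properties. The marginal identity is immediate, $\mu_t \circ (T_t^\varepsilon)^{-1} = \gamma_t \circ (\tilde T_t^\varepsilon)^{-1} = \nu_t$. For $\mu_t$-a.e. continuity, if $\tilde T_{t_n}^\varepsilon \to \tilde T_t^\varepsilon$ off a $\gamma_t$-null Borel set $Z \setminus G$, then $X \setminus q^{-1}(G)$ is $\mu_t$-null and $T_{t_n}^\varepsilon(x) = \tilde T_{t_n}^\varepsilon(q(x)) \to \tilde T_t^\varepsilon(q(x)) = T_t^\varepsilon(x)$ on $q^{-1}(G)$. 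For $\varepsilon$-optimality, the relation $h_t(x, T_t^\varepsilon(x)) = \tilde h_t(q(x), \tilde T_t^\varepsilon(q(x)))$ and the change of variables $z = q(x)$ give
\[
\int_X h_t(x, T_t^\varepsilon(x)) \, \mu_t(dx) = \int_Z \tilde h_t(z, \tilde T_t^\varepsilon(z)) \, \gamma_t(dz) \le K_{\tilde h_t}(\gamma_t, \nu_t) + \varepsilon,
\]
and pushing any plan $\sigma \in \Pi(\mu_t, \nu_t)$ forward by $(x, y) \mapsto (q(x), y)$ (which preserves the cost, again by $p \circ q = \mathrm{id}$) shows $K_{\tilde h_t}(\gamma_t, \nu_t) \le K_{h_t}(\mu_t, \nu_t)$, so $T_t^\varepsilon$ is $\varepsilon$-optimal for the original problem.

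The main obstacle I anticipate is purely a measurability nuisance: a continuous surjection from a Polish space onto a merely Souslin space need not admit a \emph{Borel} right inverse, only a universally measurable one. This does no harm, since every computation above is performed against the measures $\mu_t$ and their pushforwards, for which universally measurable maps are measurable, and Monge mappings are understood up to $\mu_t$-null sets; if a genuinely Borel representative is wanted, one may replace $T_t^\varepsilon$ (for each fixed $t$) by a Borel map agreeing with it $\mu_t$-a.e., which disturbs neither the marginal identity nor the $\mu_t$-a.e. continuity in $t$.
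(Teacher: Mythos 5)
Your proposal is correct and follows essentially the same route as the paper: represent the Souslin space $X$ as a continuous image of a Polish space, take a measurable section, push the measures forward along it, apply Theorem \ref{th3} on the Polish space, and pull the resulting mappings back, comparing Kantorovich minima by pushing plans forward along the section. Your extra observations (injectivity of the section as the reason $\gamma_t$ is non-atomic, and the remark on universal versus Borel measurability of the section) are accurate refinements of points the paper passes over silently.
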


\begin{proof} 
The Souslin space $X$ is an image of a complete separable metric space $\tilde X$ under a continuous surjective mapping 
$f \colon \tilde X \to X$. By measurable selection theorem (see \cite{B07}) there exists a mapping $g \colon X \to \tilde X$ such that $g$ is measurable with respect to the $\sigma$-algebra generated by Souslin sets and $f(g(x)) = x$ for all $x \in X$. 
Set $\gamma_t = \mu_t \circ g^{-1}$ for any $t \in T$. Then $\mu_t = \gamma_t \circ f^{-1}$ and the measures 
$\gamma_t$ are non-atomic. The mapping $t \mapsto \gamma_t$ is continuous in the total variation norm, 
since $\|\gamma_t - \gamma_{\tau}\| = \|\mu_t - \mu_{\tau}\|$ for any $t, \tau \in T$. 
The function $h(f(\tilde x), y, t)$ is continuous on $\tilde X \times Y \times T$. 
Consider the Kantorovich problem with the cost function $h(f(\tilde x), y, t)$ and measures 
$\gamma_t$, $\nu_t$, $t \in T$. 
By Theorem \ref{th3} there exist $\varepsilon$-optimal mappings $\tilde T_t \colon \tilde X \to Y$
such that $\tilde T_t$ is continuous in $t$ in the sense of convergence $\gamma_t$-a.e. Set $T_t(x) = \tilde T_t(g(x))$. 
Then $\mu_t \circ T_t^{-1} = \gamma_t \circ \tilde T_t^{-1} = \nu_t$ for any $t \in T$. The mapping $t \mapsto T_t$ is continuous in $t$ in the sense of convergence $\mu_t$-a.e. Indeed, if $t_n \to t$, $n \to \infty$, then 
$$\mu_t(\{x \in X: T_{t_n} x \not \to T_t x\} = \gamma_t(\{\tilde x \in \tilde X: \tilde T_{t_n} \tilde x \not \to \tilde T_t \tilde x\}) = 0.$$ 
Let us show that the mapping $T_t$ is $\varepsilon$-optimal for any $t \in T$. 
We have 
$$\int_X h(x, T_t x) \mu_t(dx) = \int_{\tilde X} h(f(\tilde x), \tilde T_t \tilde x) \gamma_t(d \tilde x).$$ 
Let $\sigma \in \Pi(\mu_t, \nu_t)$ be an optimal plan in the Kantorovich problem with the cost function $h(x, y, t)$ and measures $\mu_t, \nu_t$. 
Let $\tilde \sigma$ be the image of the measure $\sigma$ under the mapping $(x, y) \mapsto (g(x), y)$. Then $\tilde \sigma \in \Pi(\gamma_t, \nu_t)$ and 
$$\int_{\tilde X \times Y} h(f(\tilde x), y, t) \tilde \sigma(d \tilde x dy) = \int_{X \times Y} h(x, y, t) \sigma(dx dy). $$
Therefore, the minimum in the Kantorovich problem with the cost function $h(f(\tilde x), y, t)$ and measures $\gamma_t, \nu_t$ equals  the minimum in the Kantorovich problem with the cost function $h(x, y, t)$ and measures $\mu_t, \nu_t$. 
Therefore, the mapping $T_t$ is $\varepsilon$-optimal.
\end{proof}

\end{document}